\newcommand{\al}{\alpha}
\newcommand{\R}{\mathbb{R}}
\newcommand{\cS}{\mathcal{S}}
\newcommand{\la}{\lambda}
\newcommand{\La}{\Lambda}
\newcommand{\si}{\sigma}
\newcommand{\Si}{\Sigma}
\newcommand{\de}{\delta}
\newcommand{\ep}{\epsilon}
\newcommand{\pr}{\prime}
\newcommand{\Om}{\Omega}
\newcommand{\ti}{\tilde}
\newcommand{\mL}{\mathcal{L}}
\newcommand{\M}{\mathcal{M}}
\newcommand{\p}{\partial}
\newcommand{\RR}{\mathbb{R}}
\newcommand{\rom}[1]{\expandafter\romannumeral #1}
\newcommand{\Rom}[1]{\uppercase\expandafter{\romannumeral #1}}
\newcommand{\Ind}{\mathrm{Ind}}
\newcommand{\beq}{\begin{equation}}
\newcommand{\eeq}{\end{equation}}
\newcommand{\beqs}{\begin{eqnarray*}}
\newcommand{\eeqs}{\end{eqnarray*}}
\newcommand{\beqn}{\begin{eqnarray}}
\newcommand{\eeqn}{\end{eqnarray}}
\newcommand{\beqa}{\begin{array}}
\newcommand{\eeqa}{\end{array}}
\begin{document}

\newtheorem{theorem}{Theorem}[section]
\newtheorem{proposition}[theorem]{Proposition}
\newtheorem{corollary}[theorem]{Corollary}
\newtheorem{claim}{Claim}

\theoremstyle{definition}
\newtheorem{definition}[theorem]{Definition}

\theoremstyle{plain}
\newtheorem{lemma}[theorem]{Lemma}

\newtheorem{remark}[theorem]{Remark}

\theoremstyle{plain}

\numberwithin{equation}{section}

 \titleformat{\section}
   {\normalfont\bfseries\large}
   {\arabic{section}}
   {12pt}{}
 \titleformat{\subsection}
   {\normalfont\bfseries}
   {\arabic{section}.\arabic{subsection}}
   {11pt}{}

\title{Existence of minimal surfaces of arbitrarily large Morse index}
\author{Haozhao Li\footnote{The author is partially supported by NSFC grant No. 11131007.} \quad and \quad  Xin Zhou\footnote{The author is partially supported by NSF grant DMS-1406337.}}
\maketitle

\pdfbookmark[0]{}{beg}

\renewcommand{\abstractname}{}    
\renewcommand{\absnamepos}{empty} 
\begin{abstract}
\textbf{Abstract:} We show that in a closed  3-manifold with a generic metric of positive Ricci curvature, there are minimal surfaces of arbitrary large Morse index, which partially confirms a conjecture by F. Marques and A. Neves. We prove this by analyzing the lamination structure of the limit of minimal surfaces with bounded Morse index.
\end{abstract}

\section{Introduction}

In 1982, S. T. Yau conjectured that every closed 3-manifold admits infinitely many smooth, closed, immersed minimal surfaces \cite[Problem 88]{Y}. This was recently confirmed by Marques and Neves when the ambient manifold has positive Ricci curvature \cite{MN13}; they proved the existence of infinitely many smooth, closed, embedded minimal hypersurfaces in manifold $M^{n+1}$ of positive Ricci curvature when $2\leq n\leq 6$. They \cite{MN13, M14, N14} also conjectured that generically the Morse indices of these minimal hypersurfaces should grow linearly to infinity. However, it is even unknown that whether the Morse indices of these minimal surfaces are bounded. In this paper, we address this question in dimension 3 by showing that generically the Morse indices of these minimal surfaces must grow up to infinity.

\vspace{0.5em}
Denote $M^3$ by a 3-manifold.  A metric $g$ on $M$ is called {\em bumpy}, if any closed embedded minimal surface has no nontrivial Jacobi fields. In \cite{W91}, B. White showed that bumpy metrics are generic. Our main result is the following:

\begin{theorem}\label{main theorem}
Let $M^3$ be a closed 3-manifold with a bumpy metric $g$ of positive Ricci curvature. Then there exist embedded minimal surfaces of arbitrarily large Morse index in $(M^3, g)$.
\end{theorem}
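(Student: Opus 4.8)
The plan is to argue by contradiction. Suppose there is an integer $N$ such that every closed embedded minimal surface in $(M^3,g)$ has Morse index at most $N$. By Marques--Neves \cite{MN13}, positivity of the Ricci curvature already guarantees that $(M^3,g)$ carries infinitely many distinct closed embedded minimal surfaces; on the other hand, the compactness theorem for closed embedded minimal surfaces of uniformly bounded Morse index, together with bumpiness of $g$ \cite{W91} (which forbids a nontrivial Jacobi field on any sublimit), shows that for each $\Lambda$ only finitely many of them can have area $\le\Lambda$. Hence their areas are unbounded, and one gets a sequence $\Sigma_j$ of distinct closed embedded minimal surfaces with $\mathrm{index}(\Sigma_j)\le N$ and $\mathrm{Area}(\Sigma_j)\to\infty$. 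I then record three further inputs: since $\mathrm{Ric}_g>0$ there is no stable closed or complete minimal surface in $M$; by the Frankel property any two closed embedded minimal surfaces meet, so each $\Sigma_j$ is connected; and by the Ejiri--Micallef index inequality the bound $\mathrm{index}(\Sigma_j)\le N$ forces $\mathrm{genus}(\Sigma_j)\le g_0$ for some $g_0=g_0(N)$.

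Next I would study the limit. After passing to a subsequence, $\Sigma_j$ converges, in the sense of minimal laminations (Colding--Minicozzi, whose theory applies because the genus is bounded), to a smooth minimal lamination $\mathcal L$ of $M\setminus\mathcal S$ with $\mathcal S$ finite of cardinality controlled by $N$; the convergence is smooth and graphical away from $\mathcal S$ with some local sheet number $m_j$, and near $\mathcal S$ the surfaces degenerate only through a bounded number (controlled by $N$) of catenoid-type necks and not through multi-valued graphs spiralling into a leaf; excluding the latter blow-up is exactly where the index bound is used. Since $\mathrm{Ric}_g>0$, a limit leaf of a minimal lamination would be one-sided stable, but there is no stable closed or complete minimal leaf in $(M,g)$, so $\mathcal L$ has no limit leaves; it therefore extends smoothly across $\mathcal S$ to a finite union of closed embedded minimal surfaces, which by the Frankel property is a single closed embedded minimal surface $\Gamma$. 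Thus, away from $\mathcal S$, $\Sigma_j$ consists of $m_j$ pairwise disjoint graphs over $\Gamma$ (over its orientation double cover, if $\Gamma$ is one-sided), joined up near $\mathcal S$ by boundedly many necks.

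Then I would close the argument. If $m_j$ stayed bounded along a subsequence, $\mathrm{Area}(\Sigma_j)$ would stay bounded, since the area near $\Gamma$ is comparable to $m_j\,\mathrm{Area}(\Gamma)$ while the area contributed near $\mathcal S$ is of order $m_j+N$ times a small constant; this contradicts $\mathrm{Area}(\Sigma_j)\to\infty$, so $m_j\to\infty$. But connecting $m_j$ disjoint sheets into the single connected surface $\Sigma_j$ requires at least $m_j-1$ necks, and distinct necks---each modelled on an unstable catenoid---provide negative directions with disjoint supports for the Jacobi operator of $\Sigma_j$; these are linearly independent, so $\mathrm{index}(\Sigma_j)\ge m_j-1\to\infty$. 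This contradicts $\mathrm{index}(\Sigma_j)\le N$, and the theorem follows.

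The main obstacle is the lamination step of the second paragraph: proving, specifically for a $3$-manifold, that closed embedded minimal surfaces of uniformly bounded Morse index subconverge to a minimal lamination with a finite, $N$-controlled singular set, and that near each singular point the degeneration consists of finitely many catenoid necks and not of a multi-valued graph spiralling into the limiting leaf; it is exactly the index bound, via the curvature estimates it yields, that excludes the latter. Establishing this local structure, removing the singularities using the absence of stable leaves under positive Ricci curvature, and reading off from it the sheet count $m_j$ and the neck count, is where essentially all the analytic work lies; the area and index bookkeeping that produces the final contradiction is then elementary.
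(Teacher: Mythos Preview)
Your contradiction setup (bounded index plus Marques--Neves plus bumpiness) matches the paper's reduction to Theorem \ref{main theorem 2}, but the execution diverges at a genuine gap. The step ``by the Ejiri--Micallef index inequality the bound $\mathrm{index}(\Sigma_j)\le N$ forces $\mathrm{genus}(\Sigma_j)\le g_0$'' is not available: Ejiri--Micallef compares the area-index and energy-index of a minimal map and does not produce a genus bound for closed minimal surfaces in a general Riemannian $3$-manifold; the $\mathbb{R}^3$ bounds (Ros, Chodosh--Maximo) rely on the Gauss map and do not transfer. In fact, had this step been valid your argument would already terminate: in positive Ricci curvature, Yang--Yau together with Choi--Wang give $\mathrm{Area}(\Sigma_j)\le 16\pi(g_0+1)/\Lambda$, contradicting $\mathrm{Area}(\Sigma_j)\to\infty$ at once and rendering the entire neck-counting finale superfluous. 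That you still need the neck argument is a signal that the genus bound is doing work it cannot do.

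The paper's route avoids genus and necks entirely. A pigeonhole argument (Lemma \ref{lem:Ben2}) shows each $\Sigma_i$ is stable on every small ball away from at most $N$ balls; Schoen's curvature estimates then give uniform curvature bounds off at most $N$ limit points, and lamination convergence follows directly---no appeal to the bounded-genus Colding--Minicozzi theory is needed. The contradiction then comes from a case analysis on the leaves of the limit lamination: an accumulating leaf, or an isolated leaf reached with multiplicity $>1$, carries a positive Jacobi field and is hence stable, so after removing the point singularities (Theorem \ref{removable singularity for stable lamination}) one obtains a complete stable minimal surface, contradicting $\mathrm{Ric}_g>0$; an isolated leaf reached with multiplicity one carries a nontrivial Jacobi field that extends across the singular set, contradicting bumpiness. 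Your proposed local picture (catenoid necks contributing one unit of index each) is morally in the right direction, but it is itself a substantial structure theorem, not a shortcut to one.
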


The problem of finding minimal surfaces with fixed topology and arbitrarily large Morse index in 3-manifolds has attracted a lot of interests. This problem originated from a well-known question of Pitts and Rubinstein (c.f. \cite{PR87, CM00}). Motivated by this question, Hass-Norbury-Rubinstein \cite{HNR03} constructed a smooth metric on any 3-manifold which admits embedded genus zero minimal surfaces of arbitrarily high Morse index. Colding-Hingston \cite{CH03} and Colding-De Lellis \cite{CD05} extended the construction (by different methods), respectively, to find metrics on any 3-manifold which admits minimal surfaces with no Morse index bound of genus 1 and genus $\geq 2$. 
Compared to these results, our result gives a large family of metrics on 3-manifold which admit minimal surfaces of arbitrarily large Morse index, while assuming no genus bound.


\vspace{1em}
In fact, Theorem \ref{main theorem} is a direct corollary of the following finiteness theorem for minimal surfaces of bounded Morse index in a bumpy metric and Marques-Neves's result in \cite{MN13}. Let $M^3$ be a closed 3-manifold with a bumpy metric $g$ of positive Ricci curvature, and $\M$   the space of embedded, closed, minimal surfaces in $(M, g)$. By \cite{MN13}, $\M$ contains infinitely many elements. Given $N\gg 1$, we denote $\M_N$ by
\beq\label{M_N}
\M_N=\{\Si\in\M: \quad \Ind(\Si)\leq N\}.
\eeq
Then we have 
\begin{theorem}\label{main theorem 2} 
Let $M^3$ be a closed 3-manifold with a bumpy metric $g$ of positive Ricci curvature. 
Then there are only finitely many elements in $\M_N$.
\end{theorem}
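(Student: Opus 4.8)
The plan is to argue by contradiction: assume $\M_N$ contains infinitely many distinct surfaces $\Si_1,\Si_2,\ldots$, each with $\Ind(\Si_i)\le N$. The first step is to secure uniform bounds $\sup_i\mathrm{genus}(\Si_i)<\infty$ and $\sup_i\mathrm{Area}(\Si_i)<\infty$. The genus bound is the (by now standard) fact that the index of a minimal hypersurface in a $3$-manifold controls its first Betti number, hence its genus. The area bound is where positive Ricci curvature enters: along any closed minimal surface the Jacobi operator is $L=\Delta+|A|^2+\mathrm{Ric}(\nu,\nu)$ with $\mathrm{Ric}(\nu,\nu)\ge2\ka>0$, so $-L\le-\Delta-2\ka$ as quadratic forms and hence $\Ind(\Si_i)\ge\#\{k:\la_k(-\Delta_{\Si_i})<2\ka\}$; feeding in the conformal (Korevaar-type) upper bound $\la_k(-\Delta_{\Si_i})\le C(\mathrm{genus})\,k/\mathrm{Area}(\Si_i)$ for Laplace eigenvalues on surfaces of bounded genus, the right-hand side is $\ge c\,\mathrm{Area}(\Si_i)-C$, and $\Ind(\Si_i)\le N$ then forces $\sup_i\mathrm{Area}(\Si_i)<\infty$.

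With $\mathrm{Area}(\Si_i)\le\La$, $\Ind(\Si_i)\le N$ and bounded genus, the next step is to apply the compactness theory for minimal surfaces of bounded index: after passing to a subsequence, $\Si_i\to m\,|\Ga|$ as varifolds, where $\Ga$ is a connected closed embedded minimal surface, $m\ge1$ is an integer, and there is a finite set $\mathcal{Y}\subset M$ of at most $N$ points with $\Si_i\to\Ga$ in $C^\infty_{\mathrm{loc}}(M\setminus\mathcal{Y})$, the restriction of $\Si_i$ to each component of $\Ga\setminus\mathcal{Y}$ being, for large $i$, a union of $m$ ordered normal graphs. Positive Ricci curvature is used here repeatedly: an embedded minimal surface is automatically connected and any two closed minimal surfaces intersect (Frankel), which forces the limit lamination to have a single leaf; no leaf can be stable; and the limit leaf $\Ga$ is closed, since a non-closed leaf would be approached by itself and yield a positive transverse Jacobi field, impossible under $\mathrm{Ric}>0$.

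The plan is then to run a trichotomy on $m$ and $\mathcal{Y}$. If $m=1$ and $\mathcal{Y}=\emptyset$: then $\Si_i\to\Ga$ smoothly with multiplicity one, so for large $i$ the surface $\Si_i$ is the normal graph over $\Ga$ of a solution $f_i\to0$ (in $C^2$) of the minimal surface equation; since $g$ is bumpy, $L$ on $\Ga$ is invertible, and the implicit function theorem produces a $C^2$-neighborhood of $\Ga$ containing no other minimal surface, contradicting that the $\Si_i$ are distinct and converge to $\Ga$. If $m\ge2$: the plan is to produce a positive Jacobi field on $\Ga$ (or on its orientation double cover). Away from $\mathcal{Y}$, the normalized gap $\hat w_i:=(u_i^{m}-u_i^1)/\sup_K(u_i^{m}-u_i^1)$ between the outermost sheets --- normalized over a fixed $K\Subset\Ga\setminus\mathcal{Y}$ --- solves a linear elliptic equation whose coefficients converge to those of $L$, so along a further subsequence $\hat w_i\to\hat w\ge0$ with $\hat w\not\equiv0$ and $L\hat w=0$ on $\Ga\setminus\mathcal{Y}$; once $\hat w$ is shown to be bounded near $\mathcal{Y}$, it extends across these finitely many points to a global nonnegative solution on $\Ga$, which the strong maximum principle upgrades to $\hat w>0$; but a positive Jacobi field forces $0$ to be the bottom eigenvalue of $-L$, contradicting $\la_1(-L)\le\la_1(-\Delta_{\Ga})-2\ka=-2\ka<0$. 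The last case, $m=1$ but $\mathcal{Y}\ne\emptyset$, must also be excluded.

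The main obstacle is exactly the behavior near the curvature-concentration set $\mathcal{Y}$, which is where the lamination-structure analysis promised in the abstract is needed. Near a point $y\in\mathcal{Y}$, rescaling $\Si_i$ produces a complete embedded minimal surface in $\R^3$ of finite index, hence of finite total curvature (Fischer-Colbrie), i.e.\ a genuine minimal neck or handle; one must show that in a fixed bumpy metric of positive Ricci curvature such a degeneration cannot persist along an infinite sequence accumulating at a fixed minimal surface --- for instance by a removable-singularity / unique-continuation argument showing that the limiting normal graphs, and the Jacobi field $\hat w$, extend smoothly across $\mathcal{Y}$ (so that in fact $\mathcal{Y}=\emptyset$), or by a balancing obstruction forbidding a shrinking minimal neck. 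The delicate technical points are: matching the multi-graph description on $\Ga\setminus\mathcal{Y}$ to the blow-up picture at $\mathcal{Y}$; controlling $\hat w_i$ near $\mathcal{Y}$ so that its normalization does not escape to $\mathcal{Y}$ and $\hat w$ remains bounded; and, when $\Ga$ is one-sided, working on the orientation double cover, where one must additionally rule out Jacobi fields that are odd under the deck transformation.
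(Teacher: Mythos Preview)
Your strategy is sound and leads to a valid proof, but it takes a genuinely different route from the paper. The essential divergence is at the very first step: you secure a priori genus and area bounds (index $\Rightarrow$ genus via the Ambrozio--Carlotto--Sharp comparison of Morse index and $b_1$, then genus $+$ $\mathrm{Ric}\ge 2\kappa$ $\Rightarrow$ area via Korevaar's eigenvalue inequality), and thereafter run the by-now-standard varifold compactness for bounded-index, bounded-area sequences. The paper does \emph{not} obtain any area or genus bound; its whole point is to work directly with the lamination limit in the absence of area control, in the spirit of Colding--Minicozzi. This forces it to confront phenomena you never see: possibly infinitely many leaves, accumulating (non-isolated) leaves, leaves that are complete but non-proper and whose closure is itself a sub-lamination, and a separate removable-singularity theorem for minimal \emph{laminations} (Meeks--P\'erez--Ros). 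Your invocation of Frankel to force a single closed limit leaf is exactly what the area bound buys you; the paper cannot use it because without area control the leaves need not be closed.

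What each approach purchases: yours is shorter and more modular once the two black boxes (ACS, Korevaar) are in hand, and the $m=1$ case with $\mathcal{Y}\neq\emptyset$ that you flag as the ``main obstacle'' dissolves rather cleanly---with bounded area and index, a point of curvature concentration produces a non-flat finite-total-curvature blow-up in $\R^3$ with at least two ends, which forces local multiplicity $\ge 2$; so $m=1$ genuinely implies $\mathcal{Y}=\emptyset$ and smooth convergence, whereupon bumpiness finishes. The paper's approach, by contrast, is self-contained relative to 2015 technology (ACS and the Chodosh--Ketover--Maximo finiteness theorem postdate it) and is of independent interest as a structure theorem for lamination limits of bounded-index sequences. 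The Jacobi-field endgame (positive Jacobi field when $m\ge 2$ or on accumulating leaves, contradicting $\mathrm{Ric}>0$; nontrivial Jacobi field when $m=1$, contradicting bumpiness; passage to the orientable double cover in the one-sided case) is essentially the same in both arguments, and the paper handles your ``delicate technical points'' near $\mathcal Y$ by a barrier/foliation argument (inequality (3.3) there) that controls the normalized height functions across the singular set---this is precisely the missing ingredient you identified, carried out in the no-area-bound setting.
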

\begin{remark}
The result is not true in general if we remove the assumption of positive Ricci curvature, as Colding and Minicozzi \cite{CM99} constructed an open set of metrics on any compact 3-manifolds which admit stable embedded minimal tori of arbitrarily large area\footnote{We would like to thank the referee for pointing out this example to us.}.
\end{remark}

Theorem \ref{main theorem 2} is proved by studying the limit of a sequence of minimal surfaces of bounded Morse index. Convergence theorems for minimal surfaces have been studied extensively in the past, especially in 3-dimension. Choi-Schoen \cite{CS85} proved the compactness of embedded minimal surfaces of fixed topological type in a closed 3-manifold with positive Ricci curvature. Anderson \cite{An85} and White \cite{W87} respectively proved similar convergence and compactness results for minimal surfaces and surfaces that are stationary for parametric elliptic functionals in 3-manifold with bounded genus and bounded area. For higher dimensional case, in a recent work \cite{S15}, B. Sharp showed a convergence result for minimal hypersurfaces with bounded Morse index and bounded volume. A key technical condition of these results is the area bound. Without this assumption, Colding and Minicozzi developed a whole theory \cite{CM04I, CM04II, CM04III, CM04, CM15} to study the convergence of minimal surfaces with fixed genus but no area bound. The key point in Colding-Minicozzi's theory is to study the lamination convergence of minimal surfaces. Our proof of Theorem \ref{main theorem 2} utilizes this idea, and the convergence result in the proof can be thought as an analogy of Colding-Minicozzi's theory for minimal surfaces with fixed Morse index but no area bound.  

\vspace{1em}
At the end, we want to mention Colding-Minicozzi's finiteness theorem \cite{CM00} and a corollary. 
\begin{theorem}
(Colding-Minicozzi \cite[Theorem 1.1]{CM00}) Let $M^3$ be a closed orientable 3-manifold with a bumpy metric $g$, and $V, e$ fixed. There exist at most finitely many closed embedded minimal surfaces in $M$ with genus $e$ and area at most $V$.
\end{theorem}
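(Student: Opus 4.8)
\medskip

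\noindent\textbf{A proof proposal.} The plan is to argue by contradiction, extracting a convergent subsequence of minimal surfaces and using the bumpy hypothesis to rule out the only surviving scenario, namely a nontrivial Jacobi field on the limit. Suppose there were infinitely many pairwise distinct closed embedded minimal surfaces $\Si_1,\Si_2,\dots$ in $(M^3,g)$, each of genus $e$ and area at most $V$. By the compactness theory for minimal surfaces with bounded genus and bounded area in a closed $3$-manifold (Anderson \cite{An85} and White \cite{W87}; Choi--Schoen \cite{CS85} in the positive Ricci case), after passing to a subsequence the $\Si_i$ converge to a smooth closed embedded minimal surface $\Si_\infty$ with some integer multiplicity $m\geq 1$, and the convergence is smooth and graphical away from a finite set $\mathcal{S}\subset\Si_\infty$ of points where the second fundamental form concentrates; moreover $\#\mathcal{S}$ is bounded in terms of $e$, since genus can only be lost through finitely many degenerating necks. (If $\Si_\infty$ is disconnected we argue on each component; if $\Si_\infty$ is one-sided we pass to its orientation double cover and keep track of the induced $\mathbb{Z}_2$-action.)

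Consider first the case $m=1$. At most one of the $\Si_i$ equals $\Si_\infty$, so we may assume each $\Si_i$ is a nonzero normal graph over $\Si_\infty$ (lifting to the double cover if $\Si_\infty$ is one-sided) of a function $u_i\to 0$ in $C^\infty_{\mathrm{loc}}(\Si_\infty\setminus\mathcal{S})$. Minimality of $\Si_i$ and of $\Si_\infty$ makes $u_i$ a solution of the quasilinear minimal surface equation over $\Si_\infty$; normalizing $v_i=u_i/\|u_i\|_{C^0}$ and passing to the limit (Schauder estimates on compact subsets of $\Si_\infty\setminus\mathcal{S}$), the $v_i$ converge to a solution $v_\infty$ of the Jacobi equation $L_{\Si_\infty}v_\infty=0$. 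Since a bounded solution of a uniformly elliptic equation on a punctured $2$-disk extends across the puncture, $v_\infty$ extends to a Jacobi field on all of $\Si_\infty$; provided $v_\infty\not\equiv 0$ this contradicts bumpiness. The case $m\geq 2$ is handled in parallel: away from $\mathcal{S}$ the surface $\Si_i$ is a union of $m$ ordered graphs $u_i^1\leq\cdots\leq u_i^m$ over $\Si_\infty$ (or its double cover), and the gap $w_i=u_i^m-u_i^1\geq 0$ is a nonnegative, not identically zero solution of a homogeneous linear elliptic equation, hence positive by the Harnack inequality; normalizing and passing to the limit as above yields a nonnegative Jacobi field on $\Si_\infty\setminus\mathcal{S}$ which, after removing the removable singularities at $\mathcal{S}$ and invoking the strong maximum principle, is strictly positive on $\Si_\infty$, again contradicting bumpiness. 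This gives the finiteness claim.

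The main obstacle is the analysis near the curvature-concentration set $\mathcal{S}$. One must (i) bound $\#\mathcal{S}$ uniformly in $e$, (ii) control the sheet/gap structure on punctured neighborhoods of the points of $\mathcal{S}$ well enough that the normalized functions $v_i$ (resp. $w_i$) stay bounded there, so that the removable-singularity extension genuinely applies to the limiting Jacobi field, and (iii) ensure that the $C^0$-normalization is not entirely lost into $\mathcal{S}$ in the limit, so that $v_\infty$ (resp. $w_\infty$) is not identically zero. Items (ii) and (iii) are the delicate ones, since the convergence of the $\Si_i$ is a priori uncontrolled near $\mathcal{S}$; the resolution exploits that genus loss is localized to thin necks, together with quantitative refinements of the compactness theorem, to show that a definite fraction of the normalized perturbation survives on $\Si_\infty\setminus\mathcal{S}$ and stays bounded near $\mathcal{S}$. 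Everything else — the quasilinear-to-linear passage, Schauder estimates, Harnack, the strong maximum principle, and the reduction to double covers in the one-sided case — is standard.
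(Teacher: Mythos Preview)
This statement is not proved in the paper; it is quoted as Colding--Minicozzi's result \cite[Theorem 1.1]{CM00}. The paper does, however, rehearse the key step of that proof in Step~4 of the proof of Theorem~\ref{main theorem 2} (where it needs the analogous construction of a nontrivial Jacobi field on a limit leaf), so there is something concrete to compare against.

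Your outline is the correct one and matches \cite{CM00}: compactness gives smooth graphical convergence with multiplicity away from a finite set $\mathcal{S}$; one normalizes the graph (or the top--bottom gap when $m\geq 2$) and passes to a Jacobi field on $\Si_\infty$, contradicting bumpiness. You also correctly isolate the real content of the theorem as items (ii) and (iii): uniform boundedness of the normalized functions near $\mathcal{S}$ and survival of a nonzero limit.

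The gap is that you do not actually resolve (ii) and (iii); you only gesture at ``genus loss is localized to thin necks, together with quantitative refinements of the compactness theorem.'' That is not the mechanism Colding--Minicozzi use, and it is not clear how neck analysis alone would give the needed $C^0$ control of $v_i$ (or $w_i$) on $B_{\ep/2}(p)\cap\Si_\infty$ in terms of its values on $\partial B_\ep(p)$. The actual device, reproduced in the paper at (\ref{uniform interior estimates}) and the paragraph preceding it, is a barrier argument: near each $p\in\mathcal{S}$ one constructs a local \emph{minimal foliation} $\{v_t\}$ of a cylindrical neighborhood $N_\ep$ over $B_\ep(p)\cap\Si_\infty$, with $v_t\equiv t$ on $\partial B_\ep(p)$ and satisfying a uniform Harnack inequality $t/C\leq v_t\leq Ct$. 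By the maximum principle, each sheet of $\Si_i$ that enters $N_\ep$ is trapped between two leaves $v_{t_-}$ and $v_{t_+}$ determined by its boundary values on $\partial B_\ep(p)$, which immediately yields
\[
\sup_{B_{\ep/2}(p)\cap\Om_i}|u_i|\ \leq\ C\sup_{\partial B_\ep(p)}|u_i|.
\]
This single estimate handles both (ii) (the normalized functions stay bounded near $\mathcal{S}$, hence the limit extends) and (iii) (the $L^2$- or $C^0$-mass cannot all escape into $\mathcal{S}$, so the limit is nontrivial). Without this barrier step your argument is incomplete at exactly the point you flag as delicate.

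Two minor remarks. First, the paper (and \cite{CM00}) normalizes in $L^2$ rather than $C^0$; either works once the barrier estimate is in hand. Second, your appeal to ``bounded solutions of uniformly elliptic equations on a punctured disk extend'' presupposes precisely the boundedness that the foliation barrier supplies; it is not free.
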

By Yang-Yau \cite{YY80} and Choi-Wang \cite{CW83}, in a Riemmannian 3-manifold $M$ with $Ric_g \geq \La>0$, any closed embedded minimal surface with fixed genus $e$ has area bounded from above by $\frac{16\pi(1+e)}{\La}$ (see also \cite[\S 7.5]{CM11}). Combining with Marques-Neves's result \cite{MN13}, we have:
\begin{corollary}
Let $M^3$ be a closed 3-manifold with a bumpy metric $g$ of positive Ricci curvature. Then there exist embedded minimal surfaces of arbitrarily large genus in $(M^3, g)$.
\end{corollary}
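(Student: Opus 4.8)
The plan is a short proof by contradiction assembled entirely from the three results recalled just above; in particular it does not use Theorem~\ref{main theorem 2}. Suppose the conclusion fails: there is an integer $e_0$ such that every $\Si\in\M$ has $\mathrm{genus}(\Si)\le e_0$. Since $M$ is closed, $Ric_g\ge\La$ for some constant $\La>0$, so the Yang--Yau \cite{YY80} and Choi--Wang \cite{CW83} estimate forces
\beq
\mathrm{Area}(\Si)\ \le\ \frac{16\pi(1+e_0)}{\La}\ =:\ V
\eeq
for every $\Si\in\M$.

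Now write $\M=\bigcup_{e=0}^{e_0}\M^{(e)}$, where $\M^{(e)}=\{\Si\in\M:\ \mathrm{genus}(\Si)=e\}$. For each fixed $e\in\{0,1,\dots,e_0\}$, every element of $\M^{(e)}$ is a closed embedded minimal surface of genus $e$ and area at most $V$, so by Colding--Minicozzi's finiteness theorem \cite{CM00} the set $\M^{(e)}$ is finite. Hence $\M$, a finite union of finite sets, is finite. But by Marques--Neves \cite{MN13} the set $\M$ is infinite — a contradiction. Therefore no such $e_0$ exists, i.e., $(M^3,g)$ admits embedded minimal surfaces of arbitrarily large genus.

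The only point needing care is that the finiteness theorem of \cite{CM00} is stated for \emph{orientable} $M^3$. This costs nothing here: a closed $3$-manifold carrying a metric of positive Ricci curvature is diffeomorphic to a spherical space form (Hamilton), hence orientable; alternatively, one may pass to the oriented double cover, on which the lifted surfaces still have genus and area bounded in terms of $e_0$ and $V$, and run the identical argument. I do not expect a genuine obstacle in this corollary — its content lies in the cited theorems, and the statement is merely their formal combination.
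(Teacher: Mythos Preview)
Your proof is correct and follows exactly the route the paper indicates: combine the Yang--Yau/Choi--Wang area bound with Colding--Minicozzi's finiteness theorem \cite{CM00} and Marques--Neves \cite{MN13}. Your added remark on orientability is a welcome clarification (and the Hamilton argument is the clean way to handle it; the double-cover alternative would require checking that the lifted metric remains bumpy).
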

 
\vspace{1em}
We sketch the proof of Theorem \ref{main theorem 2} as follows. If Theorem \ref{main theorem 2} were not true, let $\{\Si_i\}$ be a sequence in $\M_N$ (\ref{M_N}). The Morse index bound implies that $\Si_i$ are locally stable away from finitely many points, and hence converge locally smoothly to a minimal lamination $\mL$ outside these points by Schoen's curvature estimates \cite{S83}. Assume that $\mL$ is  orientable and consider the leaves of $\mL$. 
If there is no isolated leaf, then we can construct a positive Jacobi field along an accumulating leaf $\La$, and hence show that $\La$ has only removable singularities; so its closure $\overline{\La}$ is a smooth, embedded, stable minimal surface, which contradicts the positive Ricci curvature condition. 
If $\mL$ has an isolated leaf $\La$ for which the multiplicity of the  convergence of $\Si_i$ to $\La$ is infinite or greater than one, then we can also construct a positive Jacobi field, and hence get a contradiction by using the same argument as above. In the case of multiplicity one convergence, we construct a nontrivial Jacobi vector field which extends across the singularities, hence contradicting the bumpy condition. One main technical difficulty here is the fact that $\Si_i$ might not a priori converge in the sense of varifold. For the case that $\mL$ is non-orientable, we can get a contradiction by combining the above arguments with the structure of non-orientable hypersurfaces (c.f. \cite[\S 3]{Z12}).\\

 {\bf Acknowledgements}: Part of this work was done while the first author was visiting MIT and he wishes to thank MIT for their generous hospitality. The second author wants to thank Toby Colding, Bill Minicozzi and Rick Schoen for helpful conversation on this work. Finally, we thank the referee for very useful comments.

\section{Preliminaries}
In this section, we recall some basic facts on minimal surfaces in 3-manifolds.
Let $M^3$ be a closed 3-manifold with a Riemannian metric $g$. A {\em minimal surface} $\Si^2\subset M^3$ is a critical point for the area functional, or equivalently a smooth surface with mean curvature equal to zero. $\Si$ is called {\em two-sided}, if $\Si$ has trivial normal bundle, i.e. there exists a unit normal vector field $\nu$. The Jacobi operator for the second variation of the area functional is given by (c.f. \cite[\S 1.8]{CM11})
\begin{equation}\label{Jacobi operator}
L=\Delta_{\Si}+|A|^2+Ric_g(\nu, \nu),
\end{equation}
where $\Delta_{\Si}$ is the Laplacian operator  on $\Si$ with the induced metric, and $A$ is the second fundamental form of $\Si$, and $Ric_g$ is the Ricci curvature of $(M, g)$. $\la\in \RR$ is an eigenvalue of $L$ if there exists a smooth function $\phi\in C^{\infty}(\Si)$ such that $L\phi=-\la\phi.$
The {\em Morse index} of $\Si$, denoted by $\Ind(\Si)$, is the number of negative eigenvalues of $L$ counted with  multiplicity. $\Si$ is called stable if the Morse index is zero. When $Ric_g>0$, there is no two-sided, stable, closed or complete, minimal surface. The case for closed minimal surface is trivial by plugging in constant test function in the second variation of area; while the case for complete minimal surface is given in the appendix, c.f. Theorem \ref{no stable complete minimal surface}. A Jacobi field $\phi\nu$ on $\Si$ is a solution of $L\phi=0.$\\

For stable minimal surfaces,  we recall the curvature estimates due to Schoen \cite{S83} (see also \cite[Corollary 2.11]{CM11}).

\begin{theorem}\label{lem:Schoen}
Given $k>0$, if $\Si^2\subset B_{r_0}(x)\subset M^3$ is an immersed stable minimal surface with trivial normal bundle, where the sectional curvature of $K_M$ of $M$ satisfies $|K_M|\leq k^2$, the radius $r_0<\rho_1(\pi/k, k)$ and $\partial \Si\subset \partial B_{r_0}(x)$, then for some $C=C(k)>0$ and all $0<\si\leq r_0$,
\beq\label{Schoen curvature estimates}
\sup_{B_{r_0-\si}(x)\cap \Si}|A|^2\leq \frac C{\si^2}.
\eeq
\end{theorem}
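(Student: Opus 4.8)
The plan is to argue by blow-up and contradiction, reducing the estimate to the classification of complete two-sided stable minimal surfaces in $\R^3$. After rescaling the metric by $r_0^{-2}$ we may assume $r_0$ is comparable to $1$ and $k$ is small; the hypothesis $r_0<\rho_1(\pi/k,k)$ then says that $B_{r_0}(x)$ sits well inside a normal ball on which the monotonicity formula and the standard comparison estimates hold in almost-Euclidean form, and all constants below will depend only on $k$. Suppose $(\ref{Schoen curvature estimates})$ fails. Then there are $3$-manifolds $M_i$ with $|K_{M_i}|\le k^2$, points $x_i\in M_i$, radii $r_i<\rho_1$, immersed two-sided stable minimal surfaces $\Si_i\subset B_{r_i}(x_i)$ with $\partial\Si_i\subset\partial B_{r_i}(x_i)$, and scales $\si_i\le r_i$, with $\si_i^2\sup_{B_{r_i-\si_i}(x_i)\cap\Si_i}|A_{\Si_i}|^2\to\infty$.

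Next comes a standard point-selection: one produces points $y_i\in\Si_i$ and radii $t_i>0$ with $B_{t_i}(y_i)\subset B_{r_i}(x_i)$, $|A_{\Si_i}|(y_i)\,t_i\to\infty$, and $\sup_{B_{t_i}(y_i)\cap\Si_i}|A_{\Si_i}|\le 2|A_{\Si_i}|(y_i)$. Set $\la_i:=|A_{\Si_i}|(y_i)\to\infty$ and dilate: let $\tilde\Si_i:=\la_i\cdot(\Si_i-y_i)$, viewed inside the rescaled ambient spaces $\la_i\cdot(M_i,y_i)$, which converge smoothly to flat $(\R^3,0)$ since $\la_i\to\infty$ and the curvature is uniformly bounded. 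The $\tilde\Si_i$ are stable minimal, satisfy $|A_{\tilde\Si_i}|(0)=1$ and $|A_{\tilde\Si_i}|\le 2$ on balls $B_{\la_i t_i}(0)$ with radii tending to infinity. The uniform curvature bound makes them locally uniformly graphical, so (passing to a subsequence, taking limits through the sheet through the origin) they converge in $C^\infty_{\mathrm{loc}}$ to a complete, two-sided, stable minimal surface $\Si_\infty\subset\R^3$ with $|A_{\Si_\infty}|(0)=1$; in particular $\Si_\infty$ is not a plane. Stability passes to the limit because the stability inequality from $(\ref{Jacobi operator})$ is closed under $C^\infty_{\mathrm{loc}}$ convergence (the ambient Ricci term drops to zero), and completeness of $\Si_\infty$ follows as it is an exhaustion of graphs with bounded geometry.

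This is a contradiction, since a complete, two-sided, stable minimal surface in $\R^3$ is a plane (do Carmo--Peng and Fischer-Colbrie--Schoen; also Pogorelov). The analytic heart of that classification, which one could equally run directly inside $B_{r_0}(x)$ without passing to a limit, is the combination of the stability inequality with the Simons inequality $|A|\,\Delta_\Si|A|\ge -|A|^4-c\,k^2|A|^2+|\nabla|A||^2$ for two-dimensional minimal hypersurfaces (using the refined Kato inequality $|\nabla A|^2\ge 2|\nabla|A||^2$): testing stability with $\phi=|A|\eta$ for a \emph{logarithmic} cutoff $\eta$, integrating the Simons inequality against $\eta^2$, and absorbing the gradient terms yields an interior bound $\int_{\Si\cap B_{r_0/2}}|A|^2\le C(k)$, which upgrades to $(\ref{Schoen curvature estimates})$ by $\varepsilon$-regularity. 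Either way the conclusion is the stated estimate with $C=C(k)$.

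I expect the main technical points to be: (i) the dilation bookkeeping together with establishing genuine $C^\infty_{\mathrm{loc}}$ convergence of the dilated \emph{immersed} surfaces to an honest complete limit — one must preclude collapsing of sheets, which is precisely why the maximal-curvature point is selected and why the limit is taken through the sheet passing through the origin; and (ii) quoting correctly the Bernstein-type classification of stable minimal surfaces in $\R^3$, the one genuinely nontrivial external input. The fact that $C$ depends on $k$ alone — rather than on, say, the area of $\Si\cap B_{r_0}(x)$ — is exactly what is bought by the hypothesis $r_0<\rho_1(\pi/k,k)$, which keeps the ambient geometry uniformly close to Euclidean on all relevant scales.
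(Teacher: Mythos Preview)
The paper does not supply its own proof of this statement: it is simply recalled as Schoen's curvature estimate \cite{S83}, with a pointer to \cite[Corollary 2.11]{CM11}, and then used as a black box. Your blow-up/point-selection argument reducing to the Bernstein theorem for complete two-sided stable minimal surfaces in $\R^3$ (do Carmo--Peng, Fischer-Colbrie--Schoen, Pogorelov) is a correct and by-now standard route, and is essentially the proof presented in \cite{CM11}. Schoen's original argument in \cite{S83} is closer in spirit to the alternative you sketch at the end: it combines the stability inequality with the Simons identity and an a priori area bound (the latter itself a consequence of stability plus the ambient curvature control), rather than passing through a limiting procedure. Both approaches exploit two-dimensionality in an essential way, either via the refined Kato inequality $|\nabla A|^2\ge 2|\nabla|A||^2$ or via the classification of stable minimal surfaces in $\R^3$.

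One minor quibble: your opening normalization ``rescale so $r_0$ is comparable to $1$ and $k$ is small'' is slightly imprecise. Rescaling the metric by $r_0^{-2}$ sends the sectional curvature bound to $|K|\le (kr_0)^2$, so what becomes small is the product $kr_0$; the hypothesis $r_0<\rho_1(\pi/k,k)$ is precisely what bounds $kr_0$ uniformly and hence makes $C$ depend only on $k$. This is cosmetic and does not affect the substance of your argument.
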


Now we introduce the definition of lamination.
\begin{definition}\label{definition of lamination}
(cf. Thurston \cite{T80}, Colding-Minicozzi \cite{CM00})
A {\em codimension one lamination} of $M^3$ is a collection $\mL$ of smooth, disjoint, connected surfaces, denoted by $\La$ (called {\em leaves}) such that $\cup_{\La\in\mL}\La$ is closed. Moreover for each $x\in M$ there exist an open neighborhood $U$ of $x$ and a local coordinate chart, $(U, \Phi)$, with $\Phi(U)\subset \RR^3$ such that in these coordinates the leaves in $\mL$ pass through the chart in slices of the form $(\RR^2\times \{t\})\cap \Phi(U).$
\end{definition}

In \cite{CM04}, Colding-Minicozzi discussed the compactness of laminations. Recall that a {\em minimal lamination} is a lamination whose leaves are minimal. A sequence of laminations is said to converge if the corresponding coordinate maps converge.

\begin{theorem}\label{convergence of lamination}
(\cite[Proposition B.1]{CM04}) Let $M^3$ be a fixed 3-manifold. If $\mL_i\subset B_{2R}(x)\subset M$ is a sequence of minimal laminations with uniformly bounded curvatures (where each leaf has boundary contained in $\p B_{2R}(x)$), then a subsequence of $\mL_i$ converges in the $C^{\al}$ topology for any $\al<1$ to a Lipschitz lamination $\mL$ in $B_R(x)$ with minimal leaves.

\end{theorem}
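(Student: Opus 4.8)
The statement is local, so I would first reduce to proving $C^\alpha$ subconvergence of $\mL_i$ in a small ball $B_s(q)$ for $q$ ranging over a fixed countable dense subset of $B_R(x)$, and then recover the global limit by diagonalizing and patching (the local limits agree on overlaps because the leaves of $\mL_i$ restricted to an overlap converge in both local descriptions, and Definition \ref{definition of lamination} only asks for \emph{some} chart at each point). So fix such a $q$ and choose $s>0$ small compared both to the local geometry of $(M,g)$ near $q$ and to the uniform curvature bound $\Lambda_0$ of the $\mL_i$, so that in geodesic normal coordinates $B_s(q)$ is $C^2$-close to a Euclidean ball. I claim that for $s$ small enough every connected component of a leaf meeting $B_{s/2}(q)$ is, in these coordinates, a graph $\{(z,u(z)):z\in D\}$ over one \emph{fixed} $2$-plane $P=P_q$, with $|\nabla u|\le 1/2$ and $\|u\|_{C^{2,\alpha}(D)}\le C(\Lambda_0,g)$. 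The gradient and curvature control on leaf components follows from $|A|\le\Lambda_0$ by an ODE/comparison argument on balls of size $\sim\Lambda_0^{-1}$, together with the no-free-boundary hypothesis; the $C^{2,\alpha}$ bound then follows from interior Schauder estimates for the minimal surface equation. That a single plane $P$ works simultaneously for all leaf components uses disjointness: two disjoint minimal graphs of bounded curvature through the same tiny ball cannot have tangent planes differing by a definite angle, since otherwise, extended over a disk of size $\sim\Lambda_0^{-1}$, they would be forced to intersect.

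Next I would exploit that the leaf components meeting $B_{s/2}(q)$ are totally ordered by disjointness: any two of the graphs $u_1,u_2$ satisfy $u_1<u_2$ on $D$, or $u_1\equiv u_2$, or $u_1>u_2$ on $D$. Parametrizing each component by its height $t=u(0)$, the height set $K_i\subset[-s',s']$ for $\mL_i$ is closed, and the correspondence $t\mapsto u^{(i)}_t$ is Lipschitz: the difference $u^{(i)}_t-u^{(i)}_{t'}$ solves a linear, uniformly elliptic, homogeneous equation, so the Harnack inequality (equivalently Schauder estimates applied to the difference) gives $\|u^{(i)}_t-u^{(i)}_{t'}\|_{C^2(D')}\le C|t-t'|$ on a slightly smaller disk $D'$. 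Now pass to a subsequence so that (i) $K_i\to K$ in Hausdorff distance for some closed $K\subset[-s',s']$, and (ii) for each point of a fixed countable dense subset of $K$, picking heights $t_i\in K_i$ converging to it, the graphs $u^{(i)}_{t_i}$ converge in $C^2(D')$ (Arzel\`a--Ascoli with the uniform $C^{2,\alpha}$ bound). By the Harnack estimate this limit depends only on the limiting height $t$, giving for every $t\in K$ a limit graph $u_t$; the $u_t$ are pairwise disjoint or equal, solve the minimal surface equation (stable under $C^2$ convergence, and then $C^\infty$ by bootstrapping), and $\{(z,u_t(z)):t\in K,\ z\in D'\}$ is closed.

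Finally I would assemble the limit lamination and pin down the mode of convergence. Define a chart $\Psi(z,t)=(z,v(z,t))$ on $D'\times[-s',s']$ by $v(z,t)=u_t(z)$ for $t\in K$, extended over each complementary interval of $K$ so as to stay strictly monotone in $t$ at every fixed $z$ (possible since the $u_t$ are strictly ordered and vary Lipschitz-continuously in $t$, e.g. by affine interpolation between the two bounding leaves); then $\Psi$ is a bi-Lipschitz homeomorphism onto its image carrying the slices $D'\times\{t\}$, $t\in K$, to the limit leaves, so $\mL:=\{(z,u_t(z))\}_{t\in K}$ is a Lipschitz minimal lamination. The corresponding charts $\Psi_i$ of $\mL_i$ converge to $\Psi$ in $C^\alpha$ for every $\alpha<1$: on $K$ the leaf graphs converge in $C^2$ (indeed $C^\infty$), the height sets converge $K_i\to K$, and on the complementary intervals the interpolation is controlled; the loss from $C^\infty$ down to $C^\alpha$, $\alpha<1$, and the merely Lipschitz regularity of $\mL$ come precisely from the crudeness of this transverse interpolation and from gaps between leaves possibly closing up in the limit. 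Patching over the countable dense set and diagonalizing gives the limit in $B_R(x)$. The step I expect to be the main obstacle is this last one — making the transverse interpolation precise enough that $\Psi_i\to\Psi$ genuinely holds in $C^\alpha$, and verifying that the limit collection of leaves really is a lamination (closedness of the union of leaves and the local product structure) despite merging leaves and vanishing gaps; the quantitative Harnack continuity of $t\mapsto u_t$ established in the previous step is exactly what excludes the possible pathologies.
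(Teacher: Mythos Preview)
The paper does not supply its own proof of this statement: it is quoted verbatim as \cite[Proposition B.1]{CM04} and only a one-line remark on the meaning of ``Lipschitz'' follows. Your outline is essentially the Colding--Minicozzi argument itself --- uniform curvature gives local graphicality over a common plane with $C^{2,\alpha}$ bounds, the Harnack inequality for differences of minimal graphs gives Lipschitz dependence on the transverse height parameter, Arzel\`a--Ascoli plus Hausdorff convergence of the height sets produces the limit leaves, and a monotone interpolation in the transverse direction yields the Lipschitz chart --- so there is nothing to compare, and your identification of the transverse interpolation as the only genuinely delicate step is accurate.
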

\begin{remark}
Here the ``Lipschitz" regularity means that the coordinate charts of $\mL$ are Lipschitz, while each leave is still smooth.
\end{remark}

We need the removable singularity theorem for stable minimal surfaces.

\begin{lemma}\label{removable singularity for stable surface}
Given a 3-manifold $M$ and a point $p\in M$, let $\Si\subset M\backslash\{p\}$ be a connected, properly embedded, minimal surface with trivial normal bundle. If $\Si$ is stable in a local geodesic ball $B_{r_0}(p)$ and $B_{r_0}(p)\cap (\bar \Si\backslash \Si)=\{p\}$, then $\Si$ has a removable singularity at $p$.
\end{lemma}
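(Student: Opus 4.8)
The plan is to combine Schoen's curvature estimate (Theorem~\ref{lem:Schoen}) with the monotonicity formula, a blow‑up analysis, and the removable singularity theorem for the minimal surface equation. So the first thing I would do is derive an interior curvature estimate at the puncture: $|A|(x)\le C_1\,d(x,p)^{-1}$ for every $x\in\Si$ with $d(x,p)$ small, where $C_1$ depends only on $(M,g)$. For such an $x$, writing $\rho=d(x,p)$, the ball $B_{\rho/2}(x)$ lies in $B_{r_0}(p)\setminus\{p\}$, and $\Si\cap B_{\rho/2}(x)$ is a properly embedded, stable minimal surface with trivial normal bundle and boundary on $\p B_{\rho/2}(x)$ (stability is inherited from $B_{r_0}(p)$, while properness of $\Si$ in $M\setminus\{p\}$ gives the boundary condition, after replacing $\rho/2$ by a nearby regular value if necessary). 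Applying Theorem~\ref{lem:Schoen} with $\si=\rho/4$, after shrinking $r_0$ so the smallness hypothesis on the radius holds, gives $|A|^2(x)\le C/\rho^2$.

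The technical heart is the area (equivalently density) bound $\mH^2(\Si\cap B_r(p))\le C_2 r^2$ for small $r$. Local finiteness $\mH^2(\Si\cap B_r(p))<\infty$ follows from the identity $\De_\Si d(\cdot,p)^2=4+O(d(\cdot,p)^2)$ valid for a minimal surface near $p$ (here $4=2\dim\Si$): integrating over an annulus $\Si\cap\big(B_r(p)\setminus B_s(p)\big)$ and applying the divergence theorem produces a nonpositive boundary term on $\p B_s(p)$ and a boundary term at most $2r\,\mH^1(\Si\cap\p B_r(p))$ on $\p B_r(p)$, so $3\,\mH^2\big(\Si\cap(B_r(p)\setminus B_s(p))\big)\le 2r\,\mH^1(\Si\cap\p B_r(p))$ for a.e.\ $r$; since $\mH^1(\Si\cap\p B_r(p))<\infty$ for a.e.\ $r$ (coarea and properness), letting $s\to0$ gives $\mH^2(\Si\cap B_r(p))<\infty$. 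Once the area is locally finite near $p$, $\Si$ extends across $p$ to a stationary integral varifold, the monotonicity formula applies, and it delivers both the quadratic bound and the existence, finiteness, and lower bound $\ge1$ of the density $\Theta(\Si,p)=\lim_{r\to0}\mH^2(\Si\cap B_r(p))/(\pi r^2)$. This is the step I expect to require the most care; morally it plays, in the present ``bounded Morse index, no area bound'' setting, the role of the compactness arguments in Colding--Minicozzi's lamination theory, and it can alternatively be obtained by rescaling $\Si$ at $p$, applying Theorem~\ref{convergence of lamination} to the rescalings (whose curvatures are uniformly $\le C_1|y|^{-1}$ away from the origin), and ruling out accumulation of leaves in the limit lamination by the positive‑Jacobi‑field mechanism used in the proof of Theorem~\ref{main theorem 2}.

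With the curvature estimate and the area bound in hand, every blow‑up of $\Si$ at $p$ converges smoothly away from the origin to a stationary integral cone in $\R^3$ that is smooth off $0$; its link is then an embedded closed geodesic in $S^2$, hence a great circle, so the tangent cone is a plane $P$ with integer multiplicity $m=\Theta(\Si,p)\ge1$ (distinct great circles always meet, so multiplicity is the only freedom). If $m=1$, Allard's regularity theorem shows $\bar\Si$ is $C^{1,\al}$ across $p$, hence smooth by elliptic regularity for minimal surfaces, so $p$ is a removable singularity.

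If $m\ge2$, then by uniqueness of the tangent plane, near $p$ and in $\exp_p$‑coordinates $\Si$ is an $m$‑valued graph over $P$ with gradient tending to $0$; since $\Si$ is embedded and two‑sided this splits into single‑valued sheets, the graphs of $u_1<\cdots<u_m$ over a punctured disk, each a bounded solution of the (quasilinear elliptic) minimal surface equation for the pulled‑back metric. By the removable singularity theorem of Bers--Finn each $u_i$ extends smoothly across the puncture, necessarily with $u_i(0)=0$. Then $w:=u_2-u_1\ge0$ solves a linear homogeneous uniformly elliptic equation on the full disk, is positive on the punctured disk, and vanishes at the interior point $0$, contradicting the strong maximum principle. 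Hence $m=1$, and the proof is complete.
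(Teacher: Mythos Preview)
Your route and the paper's share the opening move---Schoen's estimate gives $|A|(x)\le C/d(x,p)$---but then diverge. The paper blows up $\Si$ at $p$, invokes the Bernstein theorem for stable minimal surfaces in $\R^3\setminus\{0\}$ (Gulliver--Lawson \cite{GL86}, Colding--Minicozzi \cite{CM15}) to see that every blow-up is a union of planes, and then uses an argument of White \cite{W87}: since every blow-up is planar, the distance $d(\cdot,p)$ restricted to $\Si$ has no critical points near $p$ other than strict local minima, so by Morse theory each component of $\Si\cap B_r(p)$ is a disk or a punctured disk; the removable singularity theorem of Gulliver \cite{G76} and Choi--Schoen \cite{CS85} applied to a punctured-disk component finishes the job, and the Maximum Principle forces a single component. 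Your approach instead builds an area bound (your divergence-theorem computation is essentially right and yields $\mH^2(\Si\cap B_r)\le Cr^{3/2}$, whence the varifold extends stationarily across $p$ and monotonicity upgrades this to $Cr^2$), identifies every tangent cone as a multiplicity-$m$ plane, and then splits: $m=1$ via Allard, $m\ge2$ ruled out by sheet decomposition, Bers--Finn, and the strong maximum principle. The GMT packaging is attractive and the area argument is a nice self-contained substitute for quoting monotonicity from the outset.

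The gap is the clause ``by uniqueness of the tangent plane'' in the $m\ge2$ branch. What you have established is that \emph{every} tangent cone is some plane with multiplicity $m$; to write $\Si$ as an $m$-valued graph over a \emph{fixed} plane $P$ on a full punctured disk you need the plane not to rotate as the scale shrinks, i.e.\ genuine tangent-cone uniqueness. For multiplicity one this is Allard, but for $m\ge2$ it is not automatic: Simon's asymptotic uniqueness theorem is stated for multiplicity-one smooth cross-sections, and the curvature bound $|A|\le C/\rho$ alone permits the Gauss map to wander by $\int|A|\,ds\sim|\log r|$ along radial paths. Without uniqueness the global decomposition $u_1<\cdots<u_m$ over a single punctured disk is unavailable, and with it the Bers--Finn extension and the maximum-principle contradiction. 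This is fixable---for instance, from stability and your area bound a log-cutoff in the stability inequality gives $\int_{\Si\cap B_{r_0}}|A|^2<\infty$, and finite total curvature on a punctured-disk end forces the Gauss map to extend, yielding a unique tangent plane---but it must be argued. The paper's Morse-theory route sidesteps the issue entirely: it only needs that \emph{each} blow-up is planar (no uniqueness) to kill the critical points of $d(\cdot,p)$, and then the Gulliver/Choi--Schoen theorem for a minimal punctured disk is a self-contained PDE statement that does not presuppose a graphical description over a fixed plane.
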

\begin{remark}
Lemma \ref{removable singularity for stable surface} is proved for $M=\RR^3$ by Gulliver-Lawson \cite[Theorem 3.3]{GL86} and Colding-Minicozzi \cite[Lemma A. 35]{CM15}. 
We present a proof based on ideas by Colding-Minicozzi \cite[Lemma A. 35]{CM15} and White \cite{W87} for completeness. 
Note that this result also follows by combining \cite[Lemma 1]{G76} and \cite[Remark A.2]{S15}.
\end{remark}
\begin{proof}
Consider the blow up $\Si_i=\la_i^{-1}\Si$ (by embedding $M$ into some $\R^N$) of $\Si$ at $p$ by an arbitraty sequence of numbers $\la_i$ decreasing to $0$. By the scaling invariant curvature estimates (\ref{Schoen curvature estimates}), $\Si_i$ converge locally smoothly to a complete, stable, minimal surface $\Si^{\pr}$ in $\R^3\backslash\{0\}$. The Bernstein theorem for stable minimal surfaces with point singularities in $\R^3$ (c.f. \cite[Corollary 3.16]{GL86}\cite[Lemma A. 34]{CM15}) implies that $\Si^{\pr}$ is a union of planes. By an argument of White \cite[page 251]{W87}, this implies that, for small enough $r$, the distance function $x\rightarrow dist_M(x, p)$ of $M$ when restricted on $\Si\cap B_r(p)$ has no critical points except for strict local minima, and hence (by Morse theory) each connected component of $\Si\cap B_r$ is either a disk with exactly one critical point, or a punctured disk with no critical point. Take a connected component $\ti{\Si}$ of $\Si\cap B_r$ that is a punctured disk, then the removable singularity theorem for minimal surface with finite Euler characteristic by Gulliver \cite{G76} and Choi-Schoen \cite[Proposition 1]{CS85} implies that $\ti{\Si}$ has a removable singularity at $p$. So Maximum Principle implies that $\Si\cap B_r$ has only one component, and $(\Si\cap B_r)\cup\{p\}$ is an embedded minimal disk.
\end{proof}




We also need the following removable singularity theorem for minimal laminations with a point singularity. 

\begin{theorem}\label{removable singularity for stable lamination}
Give a minimal lamination $\mL$ of a punctured ball $B_r(p)\setminus \{p\}$ in a 3-dimensional Riemannian manifold $(M^3, g)$, if the leaf is either stable when it is orientable, or has a stable orientable double cover, then $\mL$ extends to a minimal lamination of $B_r(p)$.
\end{theorem}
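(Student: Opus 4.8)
The plan is to reduce to the two cases already handled—Lemma 1.8 (removable singularity for a single stable embedded minimal surface) and a dimension/compactness argument à la Colding–Minicozzi—by analyzing the structure of the lamination $\mL$ near the puncture $p$. First I would pass to the orientable double cover locally if necessary, so that without loss of generality every leaf is orientable and stable; the hypothesis is designed so this is harmless, and the extended lamination downstairs is obtained by projecting back. Next, I would distinguish whether the leaves of $\mL$ accumulate at $p$ or whether $p$ lies in the closure of only finitely many leaves. In the latter case, take any leaf $\Lambda$ with $p\in\overline{\Lambda}\setminus\Lambda$; since $\Lambda$ is properly embedded in $B_r(p)\setminus\{p\}$, connected (after decomposing into connected components), and stable in $B_r(p)$, Lemma 1.8 applies directly and gives that $\Lambda\cup\{p\}$ is a smooth embedded minimal surface. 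Doing this for each such leaf, and noting that leaves of a lamination are disjoint, shows the resulting collection is still a lamination of the full ball $B_r(p)$; one only needs to check the local product chart condition at $p$, which follows because near $p$ the extended leaves are graphs over a fixed tangent plane (this is where Lemma 1.8's conclusion that the components are graphical disks is used).

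The harder case is when infinitely many leaves accumulate at $p$. Here the key is Schoen's curvature estimate (Theorem 1.3): each leaf, being stable (or having a stable double cover) in $B_r(p)$, satisfies $\sup_{B_{r/2}(p)\cap\Lambda}|A|^2\le C/r^2$ away from $p$, but I need a uniform bound that survives blow-up. Following the blow-up argument in the proof of Lemma 1.8, rescale $\mL$ at $p$ by $\lambda_i\to 0$; by Theorem 1.5 (compactness of laminations with uniformly bounded curvature) a subsequence converges to a minimal lamination of $\R^3\setminus\{0\}$ with stable leaves, and by the Bernstein theorem for stable minimal surfaces in $\R^3$ with a point singularity, every leaf of the blow-up limit is a plane. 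This forces the tangent cone structure at $p$ to be a lamination by parallel planes, so all leaves of $\mL$ near $p$ are uniformly graphical over a single plane $P=T_pp$ with small gradient; in particular each leaf has uniformly bounded curvature and bounded area near $p$. Then each individual leaf, being a stable minimal surface with an isolated singularity and now known to be graphical, has a removable singularity by Lemma 1.8 (equivalently by the graphical PDE removable-singularity statement). The extended leaves remain disjoint and, being close graphs over $P$, fit into a product chart at $p$, so $\mL\cup\{\text{extended leaves}\}$ is a minimal lamination of $B_r(p)$.

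The main obstacle I anticipate is not the single-leaf extension—that is Lemma 1.8—but rather controlling the lamination \emph{collectively} near $p$ in the accumulating case: one must rule out that the leaves spiral or that the local picture fails to be a product chart after extension. This is exactly handled by the blow-up-to-planes step, which forces the tangent behavior at $p$ to be laminar; the care needed is to ensure the convergence $\Sigma_i=\lambda_i^{-1}\mL\to$ (planes) is uniform across \emph{all} leaves simultaneously, not leaf-by-leaf, which follows from the uniform curvature estimate and Theorem 1.5 applied to the whole lamination rather than individual leaves. A secondary technical point is checking that the Lipschitz regularity of the limit lamination from Theorem 1.5 upgrades to smoothness of the extended chart at $p$; since all extended leaves are smooth minimal graphs over a common plane with uniformly small gradient, standard elliptic regularity for the minimal surface equation gives the needed smoothness of the individual leaves, and the lamination chart is then as regular as in Definition 1.4.
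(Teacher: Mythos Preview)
The paper does not give a direct proof; the Remark following the statement observes that Schoen's curvature estimates (Theorem~\ref{lem:Schoen}) applied to each leaf $\La$ (or its orientable double cover) yield the scale-invariant bound $|A_\La|(x)\,d_M(x,p)\leq C$ for all $x\in\La$, and this is exactly the hypothesis of the Local Removable Singularity Theorem of Meeks--P\'erez--Ros \cite[Theorem~1.1]{MPR13}, which is then invoked as a black box.

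Your proposal instead attempts a self-contained proof via blow-up and Lemma~\ref{removable singularity for stable surface}. The overall strategy---rescale, use the scale-invariant curvature bound to extract a limit lamination of $\R^3\setminus\{0\}$, apply the stable Bernstein theorem with a point singularity to see that the limit consists of planes, and deduce local graphicality---is indeed the skeleton of the Meeks--P\'erez--Ros argument, but two of the steps you pass over are where the real work in \cite{MPR13} lies. First, your ``easy'' case (only finitely many leaves have $p$ in their closure) does not directly reduce to Lemma~\ref{removable singularity for stable surface}: a leaf of a lamination need not be properly embedded in $B_r(p)\setminus\{p\}$, since even a single leaf can accumulate on itself or on another leaf, so the properness hypothesis of that lemma must be earned rather than assumed. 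Second, and more substantially, in the accumulating case you pass from ``every blow-up limit is a lamination by parallel planes'' to ``all leaves near $p$ are graphs over a single fixed plane $P$''; this requires uniqueness of the tangent direction across all blow-up sequences, which you do not address. Different scales $\la_i\to 0$ could a priori produce planes with different normals, and ruling this out (or bypassing it) is one of the nontrivial components of \cite{MPR13}. So your outline points in the right direction, but what remains to be supplied is essentially the content of the theorem the paper simply cites.
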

\begin{remark}
This theorem is a direct corollary of the combination of Schoen's curvature estimates (c.f. Theorem \ref{lem:Schoen}) and the Local Removable Singularity Theorem of Meeks-P\'erez-Ros \cite[Theorem 1.1]{MPR13}. In particular, Schoen's curvature estimates applied to each orientable leave $\La$ (or its orientable double cover when $\La$ is nonorientable) implies that $|A_\La|(x)d_M(x, p)\leq C$ for all $x\in \La\subset \mL$, and this is the only requirement for $\mL$ to apply \cite[Theorem 1.1]{MPR13}.
\end{remark}


\section{Proof of Theorem \ref{main theorem 2}}

In this section, we prove Theorem \ref{main theorem 2}. First, we recall an elementary fact (c.f. Fischer-Colbrie \cite[Proposition 1]{FC85} and Sharp \cite[Lemma 3.1]{S15}):
\begin{lemma}\label{lem:Ben}
Suppose $\Si^n\subset M^{n+1}$ is a closed, smooth, embedded, minimal hypersurface with Morse index $N$. Given any disjoint collection of $N+1$ open sets $\{U_i\}_{i=1}^{N+1}$, $U_i\subset M$,  
then $\Si$ must be stable in $U_i$ for some $1\leq i\leq N+1$.
\end{lemma}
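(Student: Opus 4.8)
The plan is to use the standard variational characterization of the Morse index and the disjointness of the $U_i$. Recall that for a two-sided closed minimal hypersurface $\Si^n\subset M^{n+1}$ with unit normal $\nu$, the index (second variation) form is
\[
Q(\phi,\psi)=\int_\Si \nabla_\Si\phi\cdot\nabla_\Si\psi-\big(|A|^2+Ric_g(\nu,\nu)\big)\phi\psi,
\]
and $\Ind(\Si)=N$ means precisely that $N$ is the largest dimension of a linear subspace of $C^\infty(\Si)$ on which $Q$ is negative definite; this is Courant--Fischer theory applied to the Jacobi operator $L$ in \eqref{Jacobi operator} on the closed manifold $\Si$, together with the density of compactly supported smooth functions. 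Also, $\Si$ is \emph{stable in} an open set $U$ exactly when $Q(\phi,\phi)\geq 0$ for all $\phi\in C^\infty_c(\Si\cap U)$, so $\Si$ is \emph{unstable in} $U$ iff there is some $\phi\in C^\infty_c(\Si\cap U)$ with $Q(\phi,\phi)<0$. (If $\Si$ is one-sided, one runs the same discussion for sections of the normal line bundle, i.e.\ for odd functions on the orientable double cover, as in \cite[\S 3]{Z12}.)

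First I would argue by contradiction, assuming $\Si$ is unstable in each of $U_1,\dots,U_{N+1}$. Then for every $i$ I can choose $\phi_i\in C^\infty_c(\Si\cap U_i)$ with $Q(\phi_i,\phi_i)<0$. Since $U_1,\dots,U_{N+1}$ are pairwise disjoint, the supports of $\phi_1,\dots,\phi_{N+1}$ are pairwise disjoint subsets of $\Si$; in particular the $\phi_i$ are linearly independent, so they span an $(N+1)$-dimensional subspace $V\subset C^\infty(\Si)$.

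Next I would evaluate $Q$ on $V$. For any nonzero $\phi=\sum_{i=1}^{N+1}c_i\phi_i\in V$, disjointness of supports kills every cross term $Q(\phi_i,\phi_j)$ with $i\neq j$, whence
\[
Q(\phi,\phi)=\sum_{i=1}^{N+1}c_i^2\,Q(\phi_i,\phi_i)<0 .
\]
Thus $Q$ is negative definite on the $(N+1)$-dimensional space $V$, contradicting $\Ind(\Si)=N$. Hence $\Si$ must be stable in $U_i$ for at least one index $i$.

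The argument is essentially soft, so there is no real obstacle; the only points needing a line of care are the spectral characterization "$\Ind(\Si)=N$ $\iff$ $N$ is the maximal dimension of a $Q$-negative subspace of $C^\infty(\Si)$" (standard, but worth citing, e.g.\ via \cite{FC85, S15}), and the reduction to the double cover when $\Si$ is non-orientable so that $Q$ and $\nu$ make sense. Everything else reduces to the observation that test functions supported in disjoint sets produce an $L^2$-orthogonal, $Q$-orthogonal system.
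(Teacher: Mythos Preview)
Your proof is correct and is precisely the standard argument the paper has in mind: the paper does not spell out a proof but simply cites \cite[Proposition 1]{FC85} and \cite[Lemma 3.1]{S15}, whose proofs proceed exactly as you do---choose destabilizing test functions with disjoint supports, observe that the cross terms in $Q$ vanish, and thereby exhibit an $(N+1)$-dimensional $Q$-negative subspace contradicting $\Ind(\Si)=N$.
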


The next result is an application  of Lemma \ref{lem:Ben}.

\begin{lemma}\label{lem:Ben2}
Given a closed, smooth, embedded, minimal surface $\Si^2\subset M^3$ with index at most $N$, then for any $r>0$ small enough, there exist at most $N$ disjoint balls $\{B_r(p_i)\}_{i=1}^N$ of $M^3$ such that $\Si^2$ is stable on any ball $B_r(x)$ in $M\backslash \cup_{j=1}^NB_r(p_i).$

\end{lemma}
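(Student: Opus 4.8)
The plan is to deduce Lemma \ref{lem:Ben2} from Lemma \ref{lem:Ben} by a covering/packing argument. Suppose, for contradiction, that no such collection exists. Then for the given small $r$, we should be able to find $N+1$ points $p_1, \dots, p_{N+1}$ in $M$ such that $\Si$ is unstable on each ball $B_r(p_i)$, and moreover these balls can be taken pairwise disjoint. Once we have $N+1$ pairwise disjoint balls on each of which $\Si$ is unstable, Lemma \ref{lem:Ben} (applied with $U_i = B_r(p_i)$) immediately yields a contradiction, since it forces $\Si$ to be stable on at least one of them.

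The main point to nail down is therefore the extraction of $N+1$ \emph{disjoint} unstable balls of radius $r$ from the failure of the conclusion. First I would observe that ``$\Si$ is stable on every ball $B_r(x)$ in $M \setminus \bigcup_{j} B_r(p_j)$'' failing for every choice of at most $N$ centers means: for any points $p_1, \dots, p_N$, there is some $x$ with $B_r(x) \cap \bigl(\bigcup_j B_r(p_j)\bigr) = \emptyset$ on which $\Si$ is unstable. I would build the disjoint family greedily. Start with any ball $B_r(p_1)$ on which $\Si$ is unstable (such a ball exists — otherwise the conclusion holds vacuously with zero balls). Having chosen $B_r(p_1), \dots, B_r(p_k)$ pairwise disjoint and unstable with $k \leq N$, apply the negation of the conclusion to the centers $p_1, \dots, p_k$ (padding with repeats or arbitrary points to reach $N$ centers if needed, which only enlarges the excluded region): there is a point $p_{k+1}$ with $B_r(p_{k+1})$ disjoint from all of $B_r(p_1), \dots, B_r(p_k)$ and with $\Si$ unstable on $B_r(p_{k+1})$. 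Iterating until $k = N+1$ produces the required $N+1$ pairwise disjoint unstable balls, contradicting Lemma \ref{lem:Ben}. Hence the conclusion must hold.

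One technical subtlety is the phrase ``for any $r > 0$ small enough'': one needs $r$ small relative to the injectivity radius of $(M,g)$ so that the balls $B_r(x)$ are genuine geodesic balls and instability of $\Si$ on a ball is a well-defined notion; this is harmless and just fixes an upper bound on $r$ depending only on $(M,g)$. A second point worth a sentence is that ``unstable on $B_r(x)$'' should be read as ``$\Si \cap B_r(x)$ supports a compactly supported test function with negative second variation,'' which is exactly the hypothesis form used in Lemma \ref{lem:Ben}; this matching of definitions is what makes the contradiction go through cleanly.

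I expect the greedy extraction itself to be routine; the only place requiring a little care is making sure the negation of the conclusion is quantified correctly — that failure of the conclusion genuinely supplies, at each stage, a new unstable ball disjoint from the previously chosen ones (rather than merely disjoint from some fixed configuration), which is why padding the list of centers up to $N$ at every step is the right bookkeeping device. No deep input beyond Lemma \ref{lem:Ben} is needed.
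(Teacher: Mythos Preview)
Your proposal is correct and follows essentially the same approach as the paper: both arguments use the negation of the conclusion to successively locate disjoint balls on which $\Sigma$ is unstable, and then invoke Lemma~\ref{lem:Ben} on a collection of $N+1$ such balls to derive a contradiction. The paper organizes this as a replacement procedure (start with $N$ arbitrary disjoint balls and swap out stable ones for unstable ones one at a time), whereas you build the unstable collection greedily from scratch; these are minor reorderings of the same idea, and your version is arguably a little more direct.
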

\begin{proof}Suppose that the conclusion were not true. Given any disjoint collection of small balls $\{B_r(p_i)\}_{i=1}^N$ of $M^3$, we can find a ball $B_r(x_1)\subset M\backslash \cup_{j=1}^NB_r(p_i)$ such that $\Si$ is unstable in $B_r(x_1)$. By Lemma \ref{lem:Ben} there exists a ball, which we assume to be $B_r(p_1)$, such that $\Si$ is stable in $B_r(p_1)$. We now consider the $N$ balls
$$\cS_1:=\{B_r(p_2), \cdots, B_r(p_N), B_r(x_1)\}.$$
If the set $\cS_1$ satisfies the property of the lemma, then we stop. Otherwise,
 there exists a ball $B_r(x_2)\subset M\backslash \Big(\cup_{j=2}^NB_r(p_j)\cup B_r(x_1)\Big)$
such that $\Si$ is unstable on $B_r(x_2)$. By Lemma \ref{lem:Ben} there exists a ball, which cannot be $B_r(x_1)$ and so we assume to be $B_r(p_2)$, such that $\Si$ is stable in $B_r(p_2)$.  Then we consider the $N$ balls
$$\cS_2:=\{B_r(p_3), \cdots, B_r(p_N), B_r(x_1), B_r(x_2)\}.$$
If the set $\cS_2$ satisfies the property of the lemma, then we stop. Otherwise,
we repeat the above arguments.  We will either stop at $N$ balls satisfying the property of the lemma, or we get the disjoint collection of balls
$$\cS_N:=\{B_r(x_1), \cdots, B_r(x_N)\}.$$
Note that $\Si$ is unstable in any balls $B_r(x_i)$. By Lemma \ref{lem:Ben} again, $\Si$ will be stable in any ball $B_r(p)\subset M\backslash \cup_{j=1}^NB_r(x_i)$.
The lemma is proved.\\
\end{proof}

Now we are ready to prove Theorem \ref{main theorem 2}.
\begin{proof}[Proof of Theorem \ref{main theorem 2}] We divide the proof into the following steps.\\

\textbf{Step 1:} By contradiction argument, we assume that $(M^3, g)$ is a closed 3-manifold with a bumpy metric $g$, and $\M_N$ contains infinitely many elements. Let $\{\Si_i\}$ be a sequence in $\M_N$. Fix $r>0$. Then for each $\Si_i$,   by Lemma \ref{lem:Ben2} there exist at most $N$ balls $\{B_r(p_{i, j})\}_{j=1}^N$ of $M$, such that $\Si_i$ is stable in any ball $B_r(p)\subset M\backslash \cup_{j=1}^N B_r(p_{i, j})$. Up to a subsequence $\{p_{i, j}\}$ converge to $\{p_{\infty, j}\}$ as points, and hence by shrinking $r$, we have that $\Si_i$ is stable in any ball $B_r(p)$ in $M\backslash \cup_{j=1}^N B_{2r}(p_{\infty, j})$ for large $i$. Also by basic differential topology, $\Si_i$ must separate $B_r(p)$ when $B_r(p)$ is a topological ball, e.g. when $r$ is less than the injective radius of $(M, g)$, and hence $\Si_i\cap B_r(p)$ is orientable and two-sided. By Theorem \ref{lem:Schoen}, the curvature is uniformly bounded for any $p\in M$ such that $B_r(p)\subset M\backslash \cup_{j=1}^N B_{2r}(p_{\infty, j})$, i.e.
\beq\label{local curvature estimates}
\sup_{B_{r-\si}(p)\cap \Si_i}|A_{\Si_i}|^2\leq \frac C{\si^2}.
\eeq
Therefore, a subsequence of $\{\Si_i\}$ (clearly each $\Si_i$ is a minimal lamination) converges on such a ball $B_{r/2}(p)$ to a minimal lamination by Theorem \ref{convergence of lamination}. By passing to further subsequences (still using the same notation), $\{\Si_i\}$ converge to a minimal lamination on $M\backslash \cup_{j=1}^N B_{2r}(p_{\infty, j})$. Letting $r \rightarrow 0$, then a further subsequence of $\{\Si_i\}$ will converge to a minimal lamination of smooth, embedded, disjoint, minimal surfaces $\mL=\cup_{\La\in\mL}\La$ in $M\backslash\{p_{\infty, 1}, \cdots, p_{\infty, N}\}$. Therefore, the leaves of $\mL$ have at most $N$ singular points. Furthermore, the convergence of $\Si_i$ to each leaf in $\mL$ is locally smooth in $M\backslash\{p_{\infty, 1}, \cdots, p_{\infty, N}\}$ by the curvature estimates, and hence the leaves of $\mL$ satisfy similar local curvature estimates (\ref{local curvature estimates}) as well.

By the Maximum Principle for minimal surfaces \cite[Corollary 1.28]{CM11}, different sheets can touch at most on the singular sets. We say a leaf $\La$ is isolated if given any compact subset $K\subset \La$, there exists a tubular neighborhood $U$ of $K$ in $M$, such that the intersection of any other leaf with $U$ is empty. Given a leaf $\La\in\mL$, then either $\La$ is isolated, or by the curvature estimates (\ref{local curvature estimates}) and the Maximum Principle \cite[Corollary 1.28]{CM11}$, \La$ must be an accumulating leaf, i.e. there exists a sequence of leaves $\{\La_i\}$ in $\mL$, such that $\La_i$ converge to $\La$ locally smoothly (with multiplicity one by the definition of lamination).\\

\textbf{Step 2:}
Suppose that there is no isolated leaf. Then pick up an arbitrary accumulating leaf $\La$. 
Let us first assume $\La$ is orientable. Denote $\nu$ by the unit normal vector field along $\La$.

We claim that $\La$ is stable. Let $\La_i$ be the sequence of leafs in $\mL$ which converge to $\La$. The fact that $\La_i$ are disjoint from $\La$ implies that possibly up to a subsequence $\La_i$ converge to $\La$ locally smoothly from one side. Here convergence from one-side can be explained as follows: given any local ball $B_r(p)$ such that the intersection $\La\cap B_r(p)$ is diffeomorphic to a disk, and that $\La$ separates $B_r(p)$ into two connected components $U_1$ and $U_2$ with $\nu$ pointing into $U_1$, then for $i$ large all the intersections $\La_i\cap B_r(p)$ must lie in  $U_1$. For $i$ large enough we can choose domains $\Om_i\subset \La$ exhausting $\La$ so that each $\La_i$ can be written as a normal exponential graph over $\Om_i$ of a function $w_i$. Furthermore, we can assume that the graphical functions $w_i$ are positive, i.e. $w_i>0$, as $\La_i$ lies in one-side of $\La$. Denote $u_i=w_i/w_i(p)$ for some fixed $p\in \La$. Then following similar argument as in \cite[Lemma A.1]{CM00}, $u_i$ converge to a positive solution $u>0$ of the Jacobi equation $Lu=0$ on $\La$. By \cite[Lemma 1.36]{CM11}, the existence of a positive solution $Lu=0$ implies that $\La$ is stable. 

If $\La$ is non-orientable, we can consider the orientable double cover $\pi: \ti{\La}\rightarrow \La$. We claim that $\ti{\La}$ is a stable immersed minimal surface. In fact, we can consider the pullback of normal bundle $\pi^*\big(\nu(\La)\big)$, where $\nu(\La)$ is the normal bundle of $\La$ inside $(M, g)$. Now $\pi^*\big(\nu(\La)\big)$ forms a double cover of a neighborhood of $\La$, and we can pull back the metric $g$ and the lamination $\mL$ to $\pi^*\big(\nu(\La)\big)$. Inside the pullback lamination, the zero section, which is isometric to $\ti{\La}$, is an accumulating leave, and is therefore stable by the same argument as above.

Now we have proven that each leaf is either orientable and stable, or has a stable double cover. Therefore Theorem \ref{removable singularity for stable lamination} applied to balls near each singular point $p_{\infty, j}$ implies that the limit lamination $\mL$ has removable singularity, and each leave extends to a smooth, embedded, complete, minimal surface. Pick a leaf $\overline{\La}$ in the extended lamination; by Lemma \ref{extension of stability}, $\overline{\La}$ is stable when it is orientable, or has a stable double cover, contradicting the condition $Ric_g>0$ via Theorem \ref{no stable complete minimal surface}.
\begin{remark}
After completing this paper, we realized that \cite{MPR10} proved similar stability result for accumulating leaves using a different method by constructing local calibrations. 
\end{remark}
~\


\textbf{Step 3:} Suppose that there exists an isolated leaf $\La\in \mL$. Then by Step 1 there exists a sequence of minimal surfaces $\{\Si_{i}\}_{i=1}^{\infty}\subset \M_N$, so that given any compact subsets $K$ of $\La$, and a tubular neighborhood $U$ of $K$ (such that the intersection of other leaves with $U$ is empty), then $\Si_{i}\cap U$ converge locally smoothly to $\La $.  
In this step, we consider the case when the multiplicity of the convergence is greater than one, and we will postpone the case of multiplicity one convergence to the next step.

By similar argument as in Step 2, 
we can simply assume that $\La$ is orientable with a unit normal vector field. For $i$ large enough we can choose domains $\Om_i\subset \La$ exhausting $\La$, and tubular neighborhoods $U_i$ of $\Om_i$ with $U_i\cap \Si_i=\Om_i$, so that $\Si_i\cap U_i$ decomposes as a multi-valued normal exponential graph over $\Om_i$. By embeddedness and orientability of $\La$, these sheets are ordered by height. Using arguments similar to those in the proof of \cite[Theorem 1.1]{CM00}, we can construct a positive solution $u$ to the Jacobi equation $Lu=0$ on $\La$. Therefore $\La$ is stable. 

Now there are two possibilities. The simpler case is when $\La$ is properly embedded, and Lemma \ref{removable singularity for stable surface} implies that $\La$ has removable singularities, and the closure $\overline{\La}$ is a closed, smooth, embedded, orientable minimal surface, which is stable by Lemma \ref{extension of stability}, contradicting $Ric_g>0$. If $\La$ is not properly embedded, we can consider the set-theoretical closure $\overline{\La}$ of $\La$ in $M$, and such a closure forms a sub-lamination $\mL_{\La}$ of $\mL$
\footnote{\label{Secondfootnote}
Since $\mL$ is closed, $\overline{\La}$ is a subset of $\mL$ as subsets of $M$. Given any local coordinate chart $(U, \Phi)$ as in Definition \ref{definition of lamination}, and an arbitrary point $p\in(\overline{\La}\setminus\La)\cap U$, we claim that the leaf of $\mL\cap U$ passing through $p$, i.e. $\Phi^{-1}(t_p)$ where $t_p$ is the $t$-coordinate of $\Phi(p)$ in $\Phi(U)$, is contained in $\overline{\La}$, and this implies that $\overline{\La}$ has a lamination structure. To show the claim, let $p_i\in\La$ be a sequence of points converging to $p$. Denote $t_i$ by the $t$-coordinate of $\Phi(p_i)$ in $\Phi(U)$, then $t_i\rightarrow t_p$. Therefore $\Phi^{-1}(t_i)$ converge to $\Phi^{-1}(t_p)$ as sets. Hence $\Phi^{-1}(t_p)$ 
contains all limit points of $\La$, and $\Phi^{-1}(t_p)\subset\overline{\La}$.}. 
Besides $\La$, this sub-lamination contains only accumulating leaves by definition. Following arguments in Step 2, $\mL_{\La}$ satisfies the requirement of Theorem \ref{removable singularity for stable lamination}, and has removable singularities. Using arguments in Step 2 again, we can get a contradiction to $Ric_g>0$.\\

\textbf{Step 4:} Now we focus on the case when the multiplicity of the convergence of $\Si_i$ to each leaf is one. In this case, we are going to construct a Jacobi field along a leaf, say $\La$. We will show that $\La$ extends to a closed minimal surface with a nontrivial Jacobi field, which violates the bumpy condition. 

First we will show that $\La$ has removable singularities. 
In fact, near each singular point $p\in \overline{\La}\backslash \La$, there is a small radius $r_p>0$ such that $\La\cap B_{r_p}(p)\backslash \{p\}$ is stable. If this is not true, by similar arguments as in \cite[Proposition 1]{FC85} and \cite[\S 4, Claim 2]{S15}, we can construct infinitely many vector fields with pairwise disjoint compact supports in $B_r(p)\backslash\{p\}$ for some $r>0$, along which the second variation of $\La$ are negative. Since the convergence of $\Si_i$ to $\La$ is locally smooth on $B_r(p)\backslash\{p\}$, the second variation of $\Si_i$ along these vector fields will be negative for $i$ large enough, and this is a contradiction to the fact the $\Si_i$ has bounded Morse index.  The fact that $\La$ has removable singularity follows exactly the same as in Step 3.

We have two possibilities similarly as above. If $\La$ is not properly embedded, we can get a contradiction by looking at the accumulating leaves of the sub-lamination formed by the set-theoretical closure of $\La$. In particular, as in Step 3, $\overline{\La}\setminus\La$ is non-empty, and contains all accumulating leaves. Take an accumulating leaf $\La^{\pr}\subset\overline{\La}\setminus\La$, and consider its set-theoretical closure $\overline{\La^{\pr}}$, then this new sub-lamination $\mL_{\La^{\pr}}$ of $\mL$ contains all stable leaves, and we reduce to a case similar to Step 2.  


Therefore $\La$ is properly embedded, and $\overline{\La}$ is a closed, smooth, embedded, minimal surface. We will construct a nontrivial Jacobi field along $\overline{\La}$ by modifying the proof in \cite[Theorem 1.1 and Lemma A.1]{CM00}. In fact, the key missing point in our case is the lack of a priori uniform area bound for the sequence $\Si_i$. Theorefore we could not a priori get the varifold convergence of $\Si_i$, so we can not use the Allard Theorem \cite{Al72} to derive smooth convergence of $\Si_i$ to $\overline{\La}$ as in \cite{CM00}. Our strategy is as follows. In this part, we assume that $\La$ is orientable. By the locally smooth convergence with multiplicity one and the fact that $\La$ is isolated, for $i$ large enough we can choose domains $\Om_i\subset \La$ exhausting $\La$, and tubular neighborhoods $U_i$ of $\Om_i$ with $U_i\cap \La=\Om_i$, so that $\Si_i\cap U_i$ decomposes as a normal exponential graph over $\Om_i$ of a function $u_i$. See Figure \ref{fig1}. As in \cite[equation (7)]{Si87}, $Lu_i$ vanishes up to higher order terms which converge to zero as $i\rightarrow 0$. Here $L$ is the Jacobi operator of $\La$. Let $h_i=u_i/|u_i|_{L^2(\Om_i)}$, then the $h_i$'s satisfy uniform local $C^{2, \al}$ estimates by similar arguments using elliptic theory as \cite[Lemma A.1]{CM00}, and hence converge (up to a subsequence) to a function $h$ on $\La$ locally smoothly with $Lh=0$.

It remains to show that $h$ is nontrivial and that $h$ extends smoothly across those discrete points in $\overline{\La}\backslash\La$. To check these two facts, we will show that near each $p\in \overline{\La}\backslash \La$, the $|u_i|$'s and hence $|h_i|$'s are bounded by a uniform multiple of their supremum along $\partial B_{\ep}(p)\cap\La$ for some small $\ep>0$ (see the following (\ref{uniform interior estimates})). Recall the following facts in the proof of \cite[Theorem 1.1]{CM00}. Consider the cylinders $N_{\ep}$ (in exponential normal coordinates) over $B_{\ep}(p)\cap \overline{\La}$. If $\ep$ is small enough, there exists a foliation by minimal graphs $v_t$ of some small normal neighborhood of $\overline{\La}$ in $N_{\ep}$, so that
$$v_0(x)=0 \textrm{ for all }x\in B_{\ep}(p)\cap\overline{\La}, \textrm{ and } v_t(x)=t \textrm{ for all }x \in\partial B_{\ep}(p)\cap\La,$$
and $v_t$ satisfy uniform Harnack inequality, i.e. $t/C\leq v_t\leq Ct$ for some $C>0$. Note that we have
\begin{claim}
$\Si_i$ converge to the limit lamination $\mL$ in Hausdorff distance.
\end{claim}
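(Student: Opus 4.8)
The plan is to prove the Claim that $\Si_i$ converges to the limit lamination $\mL$ in the Hausdorff distance. Recall that away from the finitely many singular points $\{p_{\infty,j}\}_{j=1}^N$, the convergence of $\Si_i$ to $\mL$ is locally smooth with locally bounded curvature, so the content of the Claim is really about what happens near the $p_{\infty,j}$ and, more subtly, about ruling out ``extra'' pieces of $\Si_i$ escaping to regions not covered by $\mL$ or its singular set. Concretely, I would argue: (a) any limit point of $\bigcup_i \Si_i$ lies in $\overline{\bigcup_{\La\in\mL}\La}$, and (b) conversely every point of the closure of $\mL$ is a limit of points in $\Si_i$; together these give $\Si_i \to \overline{\mL}$ in Hausdorff distance on compact sets, hence on $M$ since $M$ is closed.

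For part (b), if $q$ lies on a leaf $\La$ (away from the singular set), then smooth local graphical convergence gives points $q_i\in\Si_i$ with $q_i\to q$; and if $q$ is in $\overline{\La}\setminus\La$ (a singular point $p_{\infty,j}$, or an accumulation point of leaves), approximate $q$ by regular points and diagonalize. For part (a), suppose $x_i\in\Si_i$ with $x_i\to x$. If $x\notin\{p_{\infty,j}\}$, then in a fixed small ball $B_r(x)$ disjoint from the singular set, $\Si_i$ is stable with uniformly bounded curvature (by Lemma~\ref{lem:Ben2} and Theorem~\ref{lem:Schoen}, exactly as in Step~1), so by Theorem~\ref{convergence of lamination} the sheets of $\Si_i$ through $x_i$ converge to a leaf of $\mL$ passing through $x$; hence $x\in\bigcup_{\La\in\mL}\La\subset\overline{\mL}$. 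If $x=p_{\infty,j}$ for some $j$, then trivially $x\in\overline{\mL}$ since the singular points are limits of regular leaf points (each $p_{\infty,j}$ is in $\overline{\bigcup\La}$, as it is a limit of points of $\Si_i$ sitting on sheets that converge to leaves arbitrarily close to $p_{\infty,j}$). The only case requiring care is when $x_i\to x$ with $x$ a regular point but $x$ not on any leaf: I would rule this out by noting that the local limit lamination produced by Theorem~\ref{convergence of lamination} in $B_r(x)$ has a leaf through $x$, and this leaf is a piece of some global leaf of $\mL$ by the diagonal construction of $\mL$ in Step~1 (laminations were patched together as $r\to 0$), so in fact $x$ does lie on a leaf.

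The one genuine subtlety — and the step I expect to be the main obstacle — is that a priori the sheets of $\Si_i$ near $x$ might pinch off or the number of sheets might blow up, so that even with curvature bounds one must check the limit lamination in $B_r(x)$ actually captures all of $\bigcup_i \Si_i\cap B_{r/2}(x)$ and not just ``most'' of it. This is handled by the uniform curvature estimate: points of $\Si_i$ in $B_{r/2}(x)$ lie on embedded graphical sheets of definite size (depending only on the curvature bound), so no mass can concentrate away from the limiting leaves; any subsequential Hausdorff limit of $\Si_i\cap \overline{B_{r/2}(x)}$ is a closed set foliated by pieces of minimal graphs, which is precisely (the support of) $\mL$ restricted there. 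Combining this with the finiteness and smallness of the singular set, and covering $M$ by finitely many such balls together with small balls around the $p_{\infty,j}$ whose radii we let shrink, gives Hausdorff convergence $\Si_i\to\overline{\mL}$ on all of $M$, which is the assertion of the Claim.

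Finally, I would remark that this Hausdorff convergence is exactly what is needed in the subsequent argument: it guarantees that for $i$ large the only part of $\Si_i$ entering a fixed small cylinder $N_\ep$ over $B_\ep(p)\cap\overline\La$ is the graphical piece over $\Om_i$ together with pieces that lie within the thin neighborhood foliated by the $v_t$'s, so that the comparison of $u_i$ with the foliation $v_t$ and the resulting estimate~(\ref{uniform interior estimates}) make sense with no stray sheets of $\Si_i$ present.
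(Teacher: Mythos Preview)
Your argument is correct and reaches the same conclusion as the paper, but the paper's proof is considerably shorter and uses a slightly different tool. The paper argues by contradiction: if some subsequence of $\Si_i$ meets $M\setminus U_\de(\overline{\mL})$ for a fixed $\de>0$, then since the singular points lie on the support of $\mL$, this region is contained in the locus of smooth convergence from Step~1; the monotonicity formula then gives a uniform area lower bound for $\Si_i$ there, forcing the smooth limit to be a \emph{nonempty} subset of $\mL$ inside $M\setminus U_\de(\overline{\mL})$, which is absurd. That is the entire proof.

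Your version instead establishes both halves of Hausdorff convergence directly, and for the nontrivial direction~(a) you replace the monotonicity step by the observation that uniform curvature bounds force each sheet of $\Si_i$ through a point of $B_{r/2}(x)$ to be a graph of definite size, so no subsequential Hausdorff limit point can escape the support of the limit lamination. This is a legitimate substitute for monotonicity here, and your identification of this as ``the one genuine subtlety'' is exactly right. One small point: your treatment of the case $x=p_{\infty,j}$ is a bit circular as written (you are invoking the very convergence you are proving); it is cleaner to note, as the paper does implicitly, that the singular points lie in $\overline{\mL}$, so the only content is the case $x\notin\{p_{\infty,j}\}$. Your final paragraph correctly explains how the Claim feeds into~(\ref{uniform interior estimates}).
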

\begin{proof}
Suppose not; 
then there exist $\de>0$, and a subsequence of $\Si_i$ (still denoting by $\Si_i$), such that the intersection of the support of each $\Si_i$ with $M\backslash U_{\de}(\cup_{\La\in\mL}\overline{\La})$ (the complement set of a $\de$-neighborhood of $\cup_{\La\in\mL}\overline{\La}$) is nonempty. However, we know that $\Si_i$ converge smoothly outside at most $N$ points (on the support of $\mL$) by Step 1. Also by the monotonicity formula \cite[\S 17]{Si83}, $\Si_i$ have a uniform area lower bound in $M\backslash U_{\de}(\cup_{\La\in\mL}\overline{\La})$. It means that $\Si_i$ must converge smoothly to a nonempty subset of the lamination $\mL$ in $M\backslash U_{\de}(\cup_{\La\in\mL}\overline{\La})$, hence a contradiction.
\end{proof}

\noindent Therefore by the Maximum Principle \cite[Corollary 1.28]{CM11} and the Hausdorff convergence, we can follow the argument in the last paragraph of \cite[page 119]{CM00} to show that (see Figure \ref{fig1})
\begin{equation}\label{uniform interior estimates}
\sup_{x\in B_{\ep/2}(p)\cap \Om_i}|u_i(x)|\leq C\sup_{x\in \partial B_{\ep}(p)\cap\La}|u_i(x)|.
\end{equation}
\begin{figure}[h]
\centering
\includegraphics[height=1.5in]{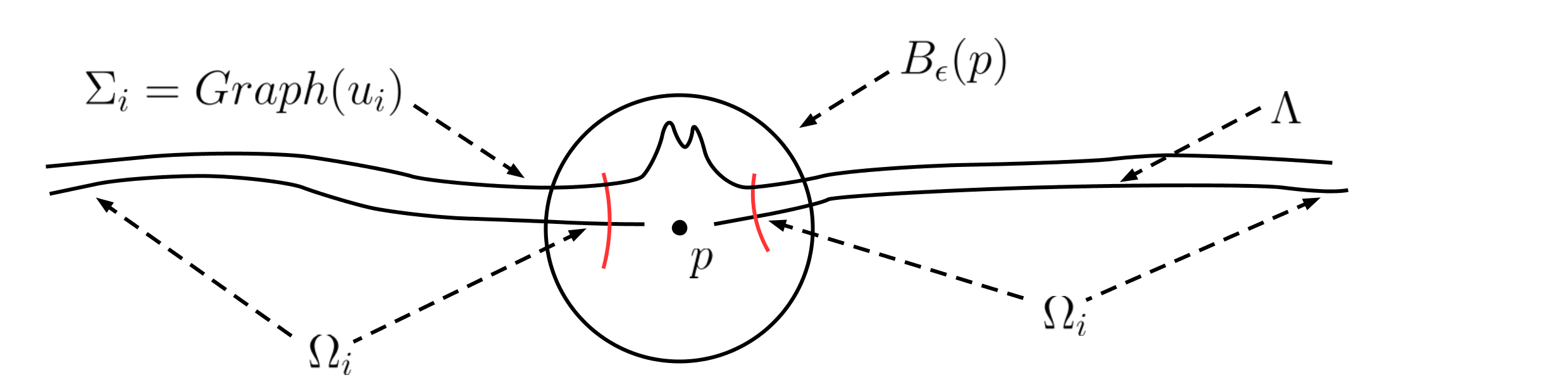}
\caption{$\Si_i$ near $p$}
\label{fig1}
\end{figure}
Now let us go back to check the behavior of $h_i$. If the limit of $h_i$ is zero, i.e. $h=0$, then $|h_i|$ converge to zero uniformly on any compact subsets of $\overline{\La}\backslash\La$ by the local smooth convergence. In particular $\sup_{\partial B_{\ep}(p)\cap\La}|h_i|$ will converge to zero uniformly, and by (\ref{uniform interior estimates}), $\sup_{B_{\ep/2}(p)\cap \Om_i}|h_i|$ will also converge uniformly to zero, hence contradicting the fact that $|h_i|_{L^2(\Om_i)}=1$. So $h$ is a nontrivial Jacobi field on $\La$. The same argument can also show that $\sup_{B_{\ep/2}(p)\cap \Om_i}|h_i|$ are uniformly bounded, and hence $h$ extends smoothly across $p$. So $h$ is a nontrivial Jacobi field on $\overline{\La}$, contradicting the bumpy condition, c.f. \cite[Theorem 2.2]{W91}.


If $\La$ is non-orientable, we can construct a nontrivial Jacobi field along $\overline{\La}$ similarly by lifting to a double cover. Since $\overline{\La}$ is closed and smooth, by the proof in \cite[Proposition 3.7]{Z12}, we can construct a double cover $\ti{M}$ of $M$, such that the pre-image of $\overline{\La}$ (which is also a double cover of $\overline{\La}$) is a smooth, embedded, orientable, minimal surface. Denote $\ti{\La}$ by the pre-image of $\La$ in $\ti{M}$. Let $\tau:\ti{M}\rightarrow\ti{M}$ by the inversion map such that $M=\ti{M}/\{id, \tau\}$ and $\overline{\La}=\overline{\ti{\La}}/\{id, \tau\}$. Let $\ti{\nu}$ be the unit normal vector field of $\ti{\La}$, then $\ti{\nu}$ is anti-symmetric, i.e. $\tau_*\ti{\nu}=-\ti{\nu}$. 
By counting the sheets of the pre-image $\ti{\Si}_i$ of $\Si_i$ in a small neighborhood of an arbitrary point $\ti{p}\in \ti{\La}$ in $\ti{M}$, which is isomorphic to a small neighborhood in $M$, we can show that 
$\ti{\Si}_i$ converge locally smoothly with multiplicity one to $\ti{\La}$. Then for $i$ large, $\ti{\Si}_i$ can be written as a normal exponential graph over $\ti{\La}$ of a function $u_i$ in the same manner as above.
\begin{claim}
$u_i$ is anti-symmetric with respect to $\tau$, i.e. $u_i\circ \tau=-u_i$.
\end{claim}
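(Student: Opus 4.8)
The plan is to verify the anti-symmetry of $u_i$ by exploiting the fact that the covering involution $\tau$ maps $\ti\Si_i$ to itself (since $\ti\Si_i$ is the full pre-image of $\Si_i$), combined with the anti-symmetry of the normal field $\ti\nu$. Concretely, fix a point $\ti p\in\ti\La$ and let $\ti q=\tau(\ti p)$, which is the other point in the fiber over $\pi(\ti p)\in\overline{\La}$. Because $\ti\Si_i$ is $\tau$-invariant, the graphical sheet of $\ti\Si_i$ over a neighborhood of $\ti p$ is carried by $\tau$ exactly onto the graphical sheet of $\ti\Si_i$ over a neighborhood of $\ti q$. The graph over $\ti p$ sits at signed height $u_i(\ti p)$ measured along $\ti\nu(\ti p)$; applying $\tau$ and using $\tau_*\ti\nu=-\ti\nu$ turns this into the graph over $\ti q$ at signed height $-u_i(\ti p)$ measured along $\ti\nu(\ti q)$. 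On the other hand, by definition that graph sits at height $u_i(\ti q)$. Hence $u_i(\ti q)=-u_i(\ti p)$, i.e. $u_i\circ\tau=-u_i$.

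The key steps, in order, are: (1) recall that $\ti\Si_i=\pi^{-1}(\Si_i)$ where $\pi:\ti M\to M$ is the double cover, so $\tau(\ti\Si_i)=\ti\Si_i$ as sets; (2) recall from the previous claim that $\ti\Si_i$ converges locally smoothly with multiplicity one to $\ti\La$, so near each point of $\ti\La$ the surface $\ti\Si_i$ is a single normal exponential graph of $u_i$; (3) identify the normal exponential coordinates at $\ti q=\tau(\ti p)$ with the push-forward under $\tau$ of those at $\ti p$, using that $\tau$ is an isometry of $(\ti M,\ti g)$ fixing $\ti\La$ setwise and that $d\tau$ sends $\ti\nu(\ti p)$ to $-\ti\nu(\ti q)$; (4) apply $\tau$ to the graph of $u_i$ over a neighborhood $\Om\subset\ti\La$ of $\ti p$: invariance of $\ti\Si_i$ forces this to agree with the graph of $u_i$ over $\tau(\Om)$, and reading off the signed heights with the sign flip from step (3) gives $u_i(\tau(x))=-u_i(x)$ for all $x$; (5) since $\ti p$ was arbitrary and the $\Om_i$ can be taken $\tau$-invariant (replace $\Om_i$ by $\Om_i\cap\tau(\Om_i)$, still exhausting $\ti\La$), conclude $u_i\circ\tau=-u_i$ on all of $\ti\La$.

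The main obstacle, such as it is, is a bookkeeping issue about signs and domains rather than anything substantive: one must be careful that the normal exponential graph representation is set up consistently at $\ti p$ and at $\ti q$, so that the identification in step (3) is legitimate — in particular that $\tau$ really does interchange these two local graph representations of the single surface $\ti\Si_i$, which uses crucially that $\ti\Si_i$ is the \emph{full} pre-image and that the convergence multiplicity is one (so there is exactly one sheet to track). Once the coordinate identification is in place, the sign flip $\tau_*\ti\nu=-\ti\nu$ does all the work and the conclusion $u_i\circ\tau=-u_i$ is immediate. There is no analytic difficulty here; it is a compatibility statement between the covering symmetry and the graph parametrization.
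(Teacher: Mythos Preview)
Your proposal is correct and follows essentially the same approach as the paper. The paper's proof fixes $p\in\La$, lifts a point $x_i\in\Si_i$ with nearest-point projection $p$ to its two preimages $\ti{x}_i^1,\ti{x}_i^2\in\ti{\Si}_i$, and observes that $\ti{x}_i^j=\exp_{p_j}\big(u_i(p_j)\ti{\nu}(p_j)\big)$ together with $\tau_*\ti{\nu}=-\ti{\nu}$ forces $u_i(p_1)=-u_i(p_2)$; this is the same computation as your step (4), just phrased by starting downstairs and lifting rather than by applying $\tau$ directly upstairs.
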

\begin{proof}
Fix an arbitrary $p\in\La$, and let $p_1, p_2$ be the pre-image of $p$ in $\ti{\La}$. For $i$ large, take $x_i\in \Si_i$ such that the nearest point projection of $x_i$ to $\La$ is $p$. Denoting $\ti{x}_i^1$ and $\ti{x}_i^2$ by the pre-image of $x_i$ in $\ti{M}$, then $\ti{x}_i^j=exp_{p_j}(u_i(p_j)\ti{\nu}(p_j))$, $j=1, 2$. As $\ti{\nu}$ is anti-symmetric and $\ti{x}_i^1, \ti{x}_i^2$ go to the same point in $M$, $u_i$ must be anti-symmetric.
\end{proof}
\noindent Theorefore, by working through the proof as above, the sequence of functions $h_i=u_i/|u_i|_{L^2}$ converge to a nontrivial solution $h$ of the Jacobi field equation on $\overline{\ti{\La}}$, and $h$ is anti-symmetric, i.e. $h\circ \tau=-h$. Then $\ti{X}=h\ti{\nu}$ is a symmetric Jacobi vector field on $\overline{\ti{\La}}$, and hence descends to a nontrivial Jacobi field on $\overline{\La}$, so contradicting the bumpy condition.

\end{proof}

\section{Appendix}

Here we show that stability can be extended across points on a minimal surface by logarithmic cutoff trick. We add a proof for completeness (see also \cite[Proposition 1.9]{GL86}). 
\begin{lemma}\label{extension of stability}
Let $\Si$ be a smooth minimal surface in $(M, g)$ with trivial normal bundle, and $p$ is an arbitrary point on $\Si$. If $\Si\backslash\{p\}$ is stable, then $\Si$ is stable.
\end{lemma}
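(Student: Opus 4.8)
The plan is to prove Lemma \ref{extension of stability} via the logarithmic cutoff trick, which exploits the fact that a single point is a set of vanishing $2$-capacity in dimension $2$. Since $\Si\setminus\{p\}$ is stable, for every $\phi\in C_c^\infty(\Si\setminus\{p\})$ we have the stability inequality
\[
Q(\phi,\phi):=\int_\Si |\nabla\phi|^2 - \big(|A|^2+Ric_g(\nu,\nu)\big)\phi^2 \;\geq\; 0 .
\]
The goal is to upgrade this to hold for all $\phi\in C_c^\infty(\Si)$ (equivalently all Lipschitz $\phi$ with compact support), which by \cite[Lemma 1.36]{CM11} is exactly the statement that $\Si$ is stable. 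So fix an arbitrary test function $\phi$ on $\Si$ (not necessarily vanishing at $p$); I will approximate it by functions supported away from $p$ and pass to the limit in $Q$.

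First I would set up the cutoff. Work in a small geodesic ball $B_\rho(p)\subset\Si$ (intrinsic distance on $\Si$; since $\Si$ is smooth at $p$ this is a genuine $2$-dimensional ball with the induced metric comparable to the Euclidean one). For $0<\ep<\rho$ define the logarithmic cutoff
\[
\eta_\ep(x)=
\begin{cases}
0, & d(x,p)\leq \ep^2,\\[2pt]
\dfrac{\log(d(x,p)/\ep^2)}{\log(1/\ep)}, & \ep^2\leq d(x,p)\leq \ep,\\[2pt]
1, & d(x,p)\geq \ep,
\end{cases}
\]
so that $\eta_\ep$ is Lipschitz, $0\leq\eta_\ep\leq 1$, $\eta_\ep\equiv 1$ outside $B_\ep(p)$, $\eta_\ep\equiv 0$ near $p$, and $\eta_\ep\to 1$ pointwise on $\Si\setminus\{p\}$ as $\ep\to 0$. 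The key computation is the Dirichlet energy estimate: on the annulus $\{\ep^2\leq d(x,p)\leq\ep\}$ one has $|\nabla\eta_\ep|\lesssim \big(\log(1/\ep)\big)^{-1} d(x,p)^{-1}$, and integrating in (geodesic) polar coordinates on a $2$-dimensional surface,
\[
\int_\Si |\nabla\eta_\ep|^2 \;\lesssim\; \frac{1}{(\log(1/\ep))^2}\int_{\ep^2}^{\ep}\frac{1}{s^2}\, s\, ds \;=\; \frac{1}{(\log(1/\ep))^2}\cdot\log\frac{1}{\ep}\;=\;\frac{1}{\log(1/\ep)}\;\xrightarrow{\ep\to 0}\;0 .
\]
Then apply the stability inequality to the legitimate test function $\eta_\ep\phi\in C_c^\infty(\Si\setminus\{p\})$ (smooth it slightly if one insists on $C^\infty$, or invoke density of Lipschitz functions), and expand
\[
0\leq Q(\eta_\ep\phi,\eta_\ep\phi)=\int_\Si \eta_\ep^2|\nabla\phi|^2 + 2\eta_\ep\phi\,\nabla\eta_\ep\!\cdot\!\nabla\phi + \phi^2|\nabla\eta_\ep|^2 - \big(|A|^2+Ric_g(\nu,\nu)\big)\eta_\ep^2\phi^2 .
\]

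To finish, let $\ep\to 0$. Since $\phi$ has compact support, $|\nabla\phi|$, $\phi$, $|A|$ and $Ric_g(\nu,\nu)$ are all bounded on $\operatorname{supp}\phi$, and $0\leq\eta_\ep\leq 1$ with $\eta_\ep\to 1$ pointwise; so by dominated convergence $\int \eta_\ep^2|\nabla\phi|^2 \to \int|\nabla\phi|^2$ and $\int \eta_\ep^2\phi^2(|A|^2+Ric_g(\nu,\nu)) \to \int\phi^2(|A|^2+Ric_g(\nu,\nu))$. The term $\int\phi^2|\nabla\eta_\ep|^2 \leq \|\phi\|_\infty^2 \int|\nabla\eta_\ep|^2 \to 0$ by the estimate above, and the cross term is controlled by Cauchy--Schwarz, $\big|2\int\eta_\ep\phi\,\nabla\eta_\ep\!\cdot\!\nabla\phi\big|\leq 2\|\phi\|_\infty\|\nabla\phi\|_{L^2(\operatorname{supp}\phi)}\big(\int|\nabla\eta_\ep|^2\big)^{1/2}\to 0$. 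Passing to the limit yields $Q(\phi,\phi)\geq 0$ for every compactly supported $\phi$ on $\Si$, i.e.\ $\Si$ is stable. The only mild subtlety — and the one point worth stating carefully rather than a genuine obstacle — is that the argument is two-dimensional: it is precisely the logarithmic gain in $\int|\nabla\eta_\ep|^2$, available only because a point has codimension $2$ in $\Si$, that makes the capacity of $\{p\}$ vanish; in higher dimensions one would need the singular set to have codimension at least $3$ (or use a different cutoff scaling). One should also make sure the induced metric on $\Si$ near $p$ is uniformly comparable to a flat metric so that the polar-coordinate area element $s\,ds\,d\theta$ estimate is legitimate — this is immediate since $\Si$ is a smooth surface and $p$ an interior point.
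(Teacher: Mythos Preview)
Your proof is correct and follows essentially the same logarithmic cutoff argument as the paper: the same cutoff function $\eta_\ep$ (up to notation), the same key energy estimate $\int|\nabla\eta_\ep|^2\lesssim 1/|\log\ep|\to 0$, and the same passage to the limit in the stability inequality applied to $\eta_\ep\phi$. The only cosmetic differences are that the paper bounds the cross term directly rather than via Cauchy--Schwarz and estimates $\int|\nabla\eta|^2$ by a dyadic decomposition instead of polar coordinates.
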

\begin{proof}
Denote $r$ by the distance function to $p$ on $\Si$. Given $\ep>0$ small enough, and let $\eta(r)$ be a cutoff function defined by:
\begin{displaymath}
\left. \eta(r)= \Bigg\{ \begin{array}{ll}
0 \quad\quad \textrm{ if $r\leq \ep^2$},\\
\frac{2\log\ep-\log r}{\log \ep}, \quad \textrm{ if $\ep^2\leq r\leq \ep$},\\
1  \quad\quad \textrm{ if $r\geq \ep$},
\end{array}\right.
\end{displaymath}
Then $|\nabla \eta|\leq \frac{1}{|\log \ep|}\frac{|\nabla r|}{r}\leq \frac{1}{\ep^2|\log \ep|}$.
Now given $\varphi \in C^{\infty}(\Si)$, by the stability of $\Si\backslash\{p\}$,
\begin{equation}\label{stability inequality on punctured surface}
\int_{\Si}(|A|^2+Ric(\nu, \nu))\eta^2\varphi^2\leq \int_{\Si}|\nabla(\eta\varphi)|^2.
\end{equation}
The right hand side can be estimated as:
$$\int_{\Si}|\nabla(\eta\varphi)|^2\leq \int_{\Si}\eta^2|\nabla \varphi|^2+2\eta\varphi|\nabla \eta||\nabla\varphi|+\varphi^2|\nabla \eta|^2.$$
Therefore,
$$\int_{\Si}2\eta\varphi|\nabla \eta||\nabla\varphi|\leq C\int_{\Si\cap B_\ep(p)} \frac{1}{\ep^2|\log \ep|} \leq C^{\pr}\frac{1}{\ep^2|\log \ep|}\ep^2=\frac{C^{\pr}}{|\log \ep|}.$$
Also denoting $N=|\log \ep|$,
\begin{displaymath}
\begin{split}
\int_{\Si}\varphi^2|\nabla \eta|^2 &\leq \frac{C}{(\log \ep)^2}\int_{\Si\cap \big(B_\ep(p)\backslash B_{\ep^2}(p)\big)} \frac{1}{r^2} =\frac{C}{(\log \ep)^2}\sum_{l=1}^N\int_{\Si\cap \big(B_{e^{-2N+l}}(p)\backslash B_{e^{-2N+l-1}}(p)\big)}\frac{1}{r^2} \\
                                                    & \leq \frac{C^{\pr}}{(\log \ep)^2}\sum_{l=1}^N \frac{1}{(e^{-2N+l-1})^2} (e^{-2N+l})^2 =\frac{C^{\pr} e^2}{(\log \ep)^2}\cdot N=\frac{C^{\pr}e^2}{|\log \ep|}.
\end{split}
\end{displaymath}
Letting $\ep$ tend to zero in (\ref{stability inequality on punctured surface}), we can get the stability inequality for $\varphi$.
\end{proof}

Next we state a result which was essentially due to Fischer-Cobrie and Schoen \cite{FS80}, although it was not stated explicitly there. The proof is well-known to experts, and we include it for completeness. 
\begin{theorem}\label{no stable complete minimal surface}
Let $(M^3, g)$ be a three dimensional Riemannian manifold with positive Ricci curvature, then any complete immersed two-sided minimal surface $\Si$ is not stable.
\end{theorem}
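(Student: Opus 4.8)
The plan is to establish non-existence of a stable complete two-sided minimal surface $\Si$ in $(M^3,g)$ with $\mathrm{Ric}_g>0$ via the classical Fischer-Colbrie--Schoen argument, which links stability to a conformal/topological constraint. First I would recall that $\Si$ being stable and two-sided means there is a positive solution $u>0$ of $Lu=0$ on $\Si$, where $L=\De_\Si+|A|^2+\mathrm{Ric}_g(\nu,\nu)$; indeed, by \cite[Lemma 1.36]{CM11} (or directly by Fischer-Colbrie--Schoen \cite{FS80}) stability on all of $\Si$ is equivalent to the existence of such a positive Jacobi function. The existence of a positive solution of $Lu=0$ is exactly the hypothesis needed to conformally change the metric on $\Si$.

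The key step is the conformal change. Using the positive function $u$, consider the new metric $\tilde g_\Si = u^2 g_\Si$ on $\Si$. A direct computation (this is the content of the Fischer-Colbrie--Schoen argument) shows that the Gauss curvature of the conformally changed metric satisfies an inequality forcing it to be nonnegative; more precisely, one combines the Gauss equation $K_\Si = \tfrac12\big(\mathrm{Scal}_M - 2\mathrm{Ric}_g(\nu,\nu)\big) - \tfrac12|A|^2$ (for a minimal surface in a $3$-manifold, $|A|^2 = -2K_{12}$ relative to intrinsic Gauss curvature...) with the equation $\De_\Si \log u = -|A|^2 - \mathrm{Ric}_g(\nu,\nu) + |\nabla \log u|^2$ obtained from $Lu=0$. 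The upshot is that the new metric $\tilde g_\Si$ is complete (completeness is preserved because $u$ is bounded below away from zero on compact sets and a standard argument, e.g. the one in \cite{FS80}, shows completeness is inherited) and has nonnegative Gauss curvature, so $\Si$ is conformally equivalent to either $\mathbb{C}$ (the plane) or $\mathbb{R}\times S^1$ (the cylinder) by the uniformization theorem for complete surfaces of nonnegative curvature; in particular $\Si$ is parabolic.

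From parabolicity one derives a contradiction with $\mathrm{Ric}_g>0$. On a parabolic surface, the positive superharmonic-type behavior forces rigidity: testing the stability inequality with the constant function pulled back appropriately, or equivalently using that the only positive solutions of $Lu=0$ on a parabolic manifold force $|A|^2 + \mathrm{Ric}_g(\nu,\nu)\equiv 0$, contradicts $\mathrm{Ric}_g(\nu,\nu)>0$ since $\mathrm{Ric}_g>0$ everywhere. Concretely, on a parabolic surface one can find cutoff functions $\varphi_R$ with $\varphi_R\to 1$ and $\int_\Si |\nabla\varphi_R|^2\to 0$; plugging $\varphi_R$ into the stability inequality $\int_\Si(|A|^2+\mathrm{Ric}_g(\nu,\nu))\varphi_R^2 \leq \int_\Si|\nabla\varphi_R|^2$ and letting $R\to\infty$ forces $\int_\Si(|A|^2+\mathrm{Ric}_g(\nu,\nu)) = 0$, which is impossible when $\mathrm{Ric}_g>0$. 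This uses exactly the same logarithmic-cutoff philosophy as in Lemma \ref{extension of stability}.

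The main obstacle I expect is the completeness of the conformally changed metric $\tilde g_\Si = u^2 g_\Si$ and the careful verification that its Gauss curvature is nonnegative; this is where the specific structure of the Gauss equation in a positively-curved ambient $3$-manifold enters, and where one must be slightly careful because $u$ could in principle decay. The standard resolution (following \cite{FS80}) is that stability combined with the Jacobi equation gives enough control: either $\tilde g_\Si$ is already complete, or one works with the auxiliary metric and uses that a noncomplete end would contradict the volume growth forced by the stability inequality. Once completeness and $\tilde K \geq 0$ are in hand, the rest (uniformization, parabolicity, cutoff argument) is routine.
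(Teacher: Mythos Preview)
Your approach is correct in spirit and follows the same Fischer-Colbrie--Schoen framework as the paper, but the paper's execution is more streamlined and avoids the very obstacle you flag. Rather than performing the conformal change $\tilde g_\Si = u^2 g_\Si$ and wrestling with completeness of the new metric, the paper simply cites \cite[Theorem 3]{FS80} directly: since $Ric_g>0$ implies positive scalar curvature, that theorem already gives that $\Si$ (with its induced metric) is conformally equivalent to $\mathbb{C}$. Then the contradiction is obtained more cleanly than your cutoff argument: from $Lu=0$ and $Ric_g(\nu,\nu)>0$ one gets $\Delta_\Si u = -\big(|A|^2+Ric_g(\nu,\nu)\big)u<0$, so $u$ is a positive superharmonic function on a parabolic surface, hence constant, forcing $\Delta_\Si u=0$ --- an immediate contradiction.

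Your route works, but the completeness of $\tilde g_\Si = u^2 g_\Si$ that you identify as the main obstacle is genuinely nontrivial; it is established in Fischer-Colbrie \cite{FC85} rather than \cite{FS80}, and your sketched resolution (``either $\tilde g_\Si$ is already complete, or one works with the auxiliary metric and uses that a noncomplete end would contradict the volume growth'') is too vague to stand on its own. The paper's proof sidesteps this entirely by quoting the black-box conformal-type result and then arguing with the original metric. Your final cutoff argument on a parabolic surface is correct, but note that the paper's superharmonic-function argument is both shorter and equivalent: the existence of cutoffs with $\int|\nabla\varphi_R|^2\to 0$ is one characterization of parabolicity, and the Liouville property for positive superharmonic functions is another.
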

\begin{proof}
If the result were not true, let $\Si$ be stable. We can assume that $\Si$ is non-compact. By \cite[Theorem 3]{FS80}, as $(M^3, g)$ has positive scalar curvature and non-negative Ricci curvature, $\Si$ endowed with the pull back metric is conformally equivalent to the complex plane $\mathbb{C}$. Since $\Si$ is stable, by \cite[Theorem 1]{FS80}, there exists a positive function $u$ on $\Si$, such that $Lu=\Delta_{\Si}u+\big(|A|^2+Ric_g(\nu, \nu)\big)u=0$, where $L$ is the Jacobi operator (\ref{Jacobi operator}) of $\Si$. Since $Ric(\nu, \nu)>0$, we have $\Delta_{\Si}u< 0$, i.e. $u$ is a superharmonic function. It is well known that any superharmonic function must be constant on $\mathbb{C}$ as $\mathbb{C}$ is parabolic (c.f. \cite[Proposition 1.37]{CM11}), hence $\Delta_{\Si}u=0$, so we get a contradiction.
\end{proof}



\vskip10pt
Haozhao Li,  Key Laboratory of Wu Wen-Tsun Mathematics, Chinese Academy of Sciences,  School of Mathematical Sciences, University of Science and Technology of China, No. 96 Jinzhai Road, Hefei, Anhui Province, 230026, China;  hzli@ustc.edu.cn.\\

Xin Zhou, Department of Mathematics, Massachusetts Institute of Technology, 77 Massachusetts Avenue, Cambridge, MA 02139, USA; xinzhou@math.mit.edu.

\end{document}